\documentclass{article}

\usepackage[T2A]{fontenc}
\usepackage[cp1251]{inputenc}
\usepackage[english]{babel}
\usepackage{amsmath}

\DeclareMathOperator{\proj}{proj}
\DeclareMathOperator{\spanop}{span}
\usepackage{amssymb}
\usepackage{amsthm}
\usepackage{mathrsfs}

\usepackage{dan2e}

\usepackage{cmap}
\usepackage{mathtext}
\usepackage{indentfirst}
\usepackage{amsmath, amsfonts, amssymb, amsthm, mathtools}
\usepackage{icomma}
\usepackage{algorithm}
\usepackage{algpseudocode}
\usepackage{hyperref}
\hypersetup{
	colorlinks=false,
	linkcolor=blue,
	filecolor=magenta,      
	urlcolor=cyan,
	pdftitle={Overleaf Example},
	pdfpagemode=FullScreen,
}

\usepackage{multicol}			  
\usepackage{soulutf8}             

\theoremstyle{definition}
\newtheorem{definition}{Definition}
\newtheorem{theorem}{Theorem}
\newtheorem{lemma}{Lemma}
\newtheorem{corollary}{Corollary}[theorem]
\newtheorem{assumption}{Assumption}
\usepackage[colorinlistoftodos,bordercolor=blue,backgroundcolor=blue!20,linecolor=blue,textsize=scriptsize]{todonotes}

\usepackage{multirow}
\usepackage{array}
\usepackage{tabularx}
\usepackage{booktabs}
\usepackage{tablefootnote}
\usepackage{subcaption,graphicx}
\usepackage{wrapfig}

\newcommand{\cX}{{\mathcal{X}}}
\newcommand{\cY}{{\mathcal{Y}}}

\newcommand{\eps}{\varepsilon}

\begin{document}
	
\Volume{}
\Year{2023}
\Pages{}

\udk{}

\title{Min-max optimization over slowly time-varying graphs}

\author{
	Nhat Trung Nguyen \Addressmark[1]\Emailmark[1],
	Alexander Rogozin \Addressmark[1]\Emailmark[2],
	Dmitriy Metelev \Addressmark[1]\Emailmark[3],
	Alexander Gasnikov \Addressmark[1,2,3,4]\Emailmark[4]
}

\Addresstext[1]{Moscow Institute of Physics and Technology, Moscow, Russia}
\Addresstext[2]{Institute for Information Transportation Problems, Moscow, Russia}
\Addresstext[3]{Caucasus Mathematic Center of Adygh State University, Moscow, Russia}
\Addresstext[4]{Ivannikov Institute for System Programming of the Russian Academy of Sciences, Research Center for Trusted Artificial Intelligence, Moscow, Russia}

\Emailtext[1]{ngnhtrg@phystech.edu}
\Emailtext[2]{aleksandr.rogozin@phystech.edu}
\Emailtext[3]{metelev.ds@phystech.edu}
\Emailtext[4]{gasnikov@yandex.ru}

\markboth{N.T. Nguyen, A. Rogozin, D. Metelev, A. Gasnikov}{Min-max optimization over slowly time-varying graphs}

\presentedby{Presented at AiJourney 2023 conference}

\dateA{30.08.2023}
\dateB{}
\dateC{}

\alttitle{}
\altauthor{}
\altAddresstext[]{}
\altpresentedby{}
\maketitle

\doi{}

\begin{abstract}
	Distributed optimization is an important direction of research in modern optimization theory. Its applications include large scale machine learning, distributed signal processing and many others. The paper studies decentralized min-max optimization for saddle point problems. Saddle point problems arise in training adversarial networks and in robust machine learning. The focus of the work is optimization over (slowly) time-varying networks. The topology of the network changes from time to time, and the velocity of changes is limited. We show that, analogically to decentralized optimization, it is sufficient to change only two edges per iteration in order to slow down convergence to the arbitrary time-varying case. At the same time, we investigate several classes of time-varying graphs for which the communication complexity can be reduced.
\end{abstract}

\begin{keywords}
	saddle point problem, decentralized optimization, time-varying graph, extragradient method
\end{keywords}



\makeatletter

\section{Introduction}

This paper studies min-max optimization problems of type
\begin{equation} \label{eq:main_prob}
	\min_{x \in \mathcal{X}} \max_{y \in \mathcal{Y}}\text{ } f(x, y) := \frac{1}{M} \sum_{m = 1}^{M} f_m(x, y),
\end{equation}
where functions $f_m(x, y)$ are convex in $x$ and concave in $y$ and $\cX, \cY$ are closed convex sets. Each function $f_m(x, y)$ is held locally at some computational node. The nodes are connected to each other via a decentralized communication network. Each agent can perform local computations and exchange information with its immediate neighbors in the network. Additionally, the network is allowed to change with time. Due to some malfunctions or disturbances, the links in the network may fail or reappear from time to time. This type of networks is called \textit{time-varying graphs}.

There are numerous applications for optimization over time-varying networks~\cite{nedic2010cooperative,nedic2020distributed}. They include distributed machine learning~\cite{rabbat2004distributed,forero2010consensus,nedic2017fast}, distributed control of power systems~\cite{ram2009distributed,gan2012optimal}, vehicle control~\cite{ren2008distributed}, distributed sensing~\cite{bazerque2009distributed}.

Decentralized optimization and min-max optimization first-order methods use two types of steps: local gradient updates and inter-node communications. We consider the case when communications are done in synchronized rounds. Therefore, the complexity of the method is measured by two quantities: number of communication rounds and number of local oracle calls. These quantities depend on the problem characteristics, which include network condition number $\chi$, function condition number $L/\mu$ and desired accuracy $\eps$. Here $L$ is the Lipschitz constant of the objective gradient and $\mu$ is the strong convexity constant.

This paper is devoted to \textit{slowly time-varying graphs}. That means that only a limited number of edges is allowed to change after each communication round. We provide lower complexity bounds for the considered class of problems. Also we propose min-max optimization methods with better communication complexity for two particular classes of slowly time-varying networks.

For optimization over networks (not min-max) lower bounds are known as well as corresponding optimal algorithms. For static graphs, lower bounds were derived in \cite{scaman2017optimal} and in the same paper the optimal dual (i.e. using a dual oracle) algorithm was proposed. Optimal decentralized methods with primal oracle were proposed in \cite{kovalev2020optimal}. Considering time-varying graphs (with arbitrary changes at each iteration), lower complexity bounds were proposed in \cite{kovalev2021lower}. Optimal primal algorithm was proposed in the same paper \cite{kovalev2021lower}, and optimal dual method first appeared in \cite{kovalev2021adom}. After that, lower bounds for slowly time-varying graphs with different velocities of network changes were studied in \cite{metelev2023consensus}. In \cite{metelev2023decentralized}, it was shown that it is sufficient to change only two edges at each iteration to make communication complexity equal to arbitrary time-varying graph. The overview of lower bounds for decentralized optimization is presented in Table~\ref{table:optimization_lower_bounds} (notation $\Omega(\cdot)$ is omitted).
\begin{wrapfigure}[10]{r}{8.5cm}
	\begin{minipage}{8cm}
		\vspace{-0.3cm}
		\begin{table}[H]
			\begin{tabular}{|c|c|c|c|}
				\hline
				& static & time-var. & slowly time-var. \\ \hline
				\rule{0.00cm}{0.5cm} comm. & $\sqrt{\chi}\sqrt\frac{L}{\mu}\log\frac{1}{\eps}$ & $\chi\sqrt\frac{L}{\mu}\log\frac{1}{\eps}$ & ${\chi}\sqrt\frac{L}{\mu}\log\frac{1}{\eps}$  \\ \hline
				\rule{0.00cm}{0.5cm} oracle & $\sqrt\frac{L}{\mu}\log\frac{1}{\eps}$ & $\sqrt\frac{L}{\mu}\log\frac{1}{\eps}$ & $\sqrt\frac{L}{\mu}\log\frac{1}{\eps}$ \\ \hline
				\rule{0.00cm}{0.4cm} paper & \cite{scaman2017optimal} & \cite{kovalev2021lower} & \cite{metelev2023decentralized} \\ \hline
			\end{tabular}
			\caption{Lower bounds for optimization}\label{table:optimization_lower_bounds}
		\end{table}
	\end{minipage}
\end{wrapfigure}

The results for decentralized saddle-point problems are analogical to optimization. Lower bounds for min-max optimization over static graphs were given in \cite{beznosikov2021distributed_2}. The same paper \cite{beznosikov2021distributed_2} proposed algorithms optimal up to a logarithmic term. The case of (arbitrary) time-varying graphs was studied in \cite{beznosikov2021near} along with methods optimal up to a logarithmic factor. Optimal algorithms for sum-type variational inequalities (a generalization of saddle point problem) were proposed in \cite{kovalev2022optimal}. Finally, this paper studies lower bounds for saddle-point problems over slowly time-varying graphs (only two edge changes per iteration). The corresponding results are presented in Table~\ref{table:saddle_lower_bounds}. It is worth noting that the lower complexity bounds are same as for optimization (Table~\ref{table:optimization_lower_bounds}), except for replacing $\sqrt{L/\mu}$ by $L/\mu$.

\begin{wrapfigure}[7]{r}{8.5cm}
	\begin{minipage}{8cm}
		\vspace{-0.8cm}
		\begin{table}[H]
			\begin{tabular}{|c|c|c|c|}
				\hline
				& static & time-var. & slowly time-var. \\ \hline
				\rule{0.00cm}{0.5cm} comm. & $\sqrt{\chi}\frac{L}{\mu}\log\frac{1}{\eps}$ & $\chi\frac{L}{\mu}\log\frac{1}{\eps}$ & ${\chi}\frac{L}{\mu}\log\frac{1}{\eps}$  \\ \hline
				\rule{0.00cm}{0.5cm} oracle & $\frac{L}{\mu}\log\frac{1}{\eps}$ & $\frac{L}{\mu}\log\frac{1}{\eps}$ & $\frac{L}{\mu}\log\frac{1}{\eps}$ \\ \hline
				\rule{0.00cm}{0.4cm} paper & \cite{beznosikov2021distributed_2} & \cite{beznosikov2021near} & This paper \\ \hline
			\end{tabular}
			\caption{Lower bounds for saddle point problems}\label{table:saddle_lower_bounds}
		\end{table}
	\end{minipage}
\end{wrapfigure}

This paper has the following organization. In Section~\ref{sec:notation_and_assumptions}, we introduce the needed assumptions and notation. After that, in Section~\ref{sec:upper_bounds}, we show how to get an acceleration in communications using additional assumptions on the time-varying network. In Section~\ref{sec:lower_bounds}, we provide lower bounds for slowly time-varying networks.

\vspace{0.2cm}
\section{Notation and Assumptions}\label{sec:notation_and_assumptions}

\noindent\textbf{Smoothness and strong convexity}.

We work with the problem \eqref{eq:main_prob}, where the sets $\mathcal{X} \subseteq \mathbb{R}^{n_x}$ and $\mathcal{Y} \subseteq \mathbb{R}^{n_y}$ are closed convex sets. Additionally, we introduce the set $\mathcal{Z} = \mathcal{X} \times \mathcal{Y} \subseteq \mathbb{R}^{n_z}$, $z = (x, y)$, $n_z = n_x + n_y$, and the operator $F$:
\begin{equation*} \label{eq:operatorF}
	F_m(z) = F_m(x, y) =
	\begin{pmatrix}
		\nabla_x f_m(x, y) \\
		-\nabla_y f_m(x, y)
	\end{pmatrix},\hspace{0.2cm}
	F(z) = \frac{1}{M} \sum_{m=1}^M F_m(z).
\end{equation*}


\begin{assumption} \label{assum:func-properties}
    Let functions $f(x,y)$ and $f_m(x, y)$ satisfy following properties:
    \begin{enumerate}
        \item Function $f(x, y)$ is $L$-smooth, i.e. for all $z_1, z_2 \in \mathcal{Z}$  it holds
	\begin{equation*}
		\| F(z_1) - F(z_2) \| \leq L \| z_1 - z_2 \|.
	\end{equation*}
        \item For all $m$, $f_m(x, y)$ is $L_{\text{max}}$-smooth, i.e. for all $z_1, z_2 \in \mathcal{Z}$ it holds
	\begin{equation*}
		\| F_m(z_1) - F_m(z_2) \| \leq L_{\text{max}} \| z_1 - z_2 \|.
	\end{equation*}
        \item Function $f(x, y)$ is strongly-convex-strongly-concave with constant $\mu$, i.e. for all $z_1, z_2 \in \mathcal{Z}$ it holds
	\begin{equation*}
		\langle F(z_1) - F(z_2), z_1 - z_2 \rangle \geq \mu \|z_1 - z_2\|^2.
	\end{equation*}
    \end{enumerate}
\end{assumption}

\noindent\textbf{Decentralized Communication}.

At each communication round, we use a graph to represent the connection between computing nodes. Denote the network of communications over time by the sequence of graphs $\{\mathcal{G}^k\}_{k=0}^\infty = \{(\mathcal{V}, \mathcal{E}^k)\}_{k=0}^{\infty}$, where $\mathcal{V} = \{1, \dots, M \}$ is the set of nodes and $\mathcal{E}^k$ is the set of available connections at $k$-th communication round. For each node $m \in \mathcal{V}$, we use the notation $\mathcal{N}_m^k = \{i \in \mathcal{V} | (i, m) \in \mathcal{E}^k \}$ to indicate the set of its neighbors at round $k$ and at that time it can only communicate with nodes in $\mathcal{N}_m^k$. 

\vspace{0.2cm}
\noindent\textbf{Gossip Matrices}
    Each computational node $m$ contains its own local vector $z_m = (x_m, y_m)$. It is required to satisfy the consensus constraints $z_1 = \dots = z_M$. For this purpose, we use a concept called \textit{gossip matrix}. 
    \begin{assumption} \label{assum:gossip}

        Each graph in the time-varying network correspond to a gossip matrix $W^k \in \mathbb R^{M \times M}$ that satisfies the following properties.
	\begin{enumerate}
		\item $W^k$ is positive semi-definite,
            \item $W^k_{i, j} = 0$ if $ i \neq j$ and $(i, j) \notin \mathcal{E}^k$,
            \item $\ker W^k = \spanop (\mathbf{1})$, where $\mathbf{1} = (1, \dots, 1) \in \mathbb R^M$.
	\end{enumerate}
    The number $\chi(W)= \frac{\lambda_{\max}(W) }{\lambda^+_{\min} (W) }$ is called the \textit{condition number} of gossip matrix $W$, where $\lambda_{\max}(W)$ and $\lambda^+_{\min}(W)$ denote the largest and smallest positive eigenvalue of $W$. For a time-varying network $\{\mathcal{G}^k\}_{k=1}^{\infty}$, the condition number is given by $\chi = \sup\limits_{k \in \mathbb N \cup \{0\}} \frac{\lambda_{\max}(W^k)}{\lambda_{\min}(W^k)}$.
    
    In this paper, we also consider network with Laplacian matrices $ \mathbf{L}(\mathcal{G}^k)$, which is a typical example of gossip matrix.
\end{assumption}

We introduce the \textit{consensus space}  $\mathcal{L} \subseteq \mathbb{R}^{Mn_z}$, defined by
\begin{equation}
	\mathcal{L} = \{ \mathbf{z} = (z_1^T, \dots, z_M^T)^T \in \mathbb{R}^{Mn_z}: z_1 = \dots = z_M\}.    
\end{equation}

Consider also the space $\mathcal{L}^{\perp} \subseteq \mathbb{R}^{Mn_z}$ which is an orthogonal complement to the space $\mathcal{L}$, defined by
\begin{equation}
	\mathcal{L}^{\perp} = \{ \mathbf{z} = (z_1^T, \dots, z_M^T)^T \in \mathbb{R}^{Mn_z}: \sum_{m = 1}^M z_m = 0\}.
\end{equation}

\section{Upper bounds}\label{sec:upper_bounds}

In this section, we cover two classes of time-varying networks: networks with connected skeleton and networks with small random Markovian changes. For both scenarios, we propose a decentralized optimization algorithm that uses an auxiliary consensus procedure. We show that for the considered classes of problems, the dependence of communication complexity on factor $\chi$ may be reduced from $\chi$ to $\sqrt\chi$ with additional terms. The overview of results is presented in Table~\ref{table:tw_upper_bounds} (the $O(\cdot)$ notation is omitted).

\begin{minipage}{15cm}
	\begin{table}[H]
		\begin{tabular}{|c|c|c|c|c|}
			\hline
			& arbitrary time-var. & slowly time-var. & connected skeleton & Markovian  \\ \hline
			comm. & $\chi\frac{L}{\mu}\log\frac{1}{\eps}$ & $\chi\frac{L}{\mu}\log\frac{1}{\eps}$  & $\sqrt\chi\log\chi\frac{L}{\mu}\log^2\frac{1}{\eps}$ & $\tau\left(\sqrt{\chi} + \frac{\rho^2}{(\lambda_{\min}^+)^2}\right)\frac{L}{\mu}\log^2\frac{1}{\eps}$ \\ \hline
			\rule{0.00cm}{0.5cm} oracle & $\frac{L}{\mu}\log\frac{1}{\eps}$ & $\frac{L}{\mu}\log\frac{1}{\eps}$ & $\frac{L}{\mu}\log\frac{1}{\eps}$ & $\frac{L}{\mu}\log\frac{1}{\eps}$ \\ \hline
			\rule{0.00cm}{0.4cm} paper & \cite{kovalev2022optimal} & \cite{kovalev2022optimal} & This paper & This paper \\ \hline
		\end{tabular}
		\caption{Upper bounds for saddle point problems over arbitrary and slowly time-varying graphs}\label{table:tw_upper_bounds}
	\end{table}
\end{minipage}

\vspace{0.2cm}
Our algorithms are based on extragradient method with consensus subroutine. They make several communication rounds after each extragradient step. After a sufficient number of communications, consensus is reached up to a desired accuracy. Such approximate averaging makes the trajectories of computational nodes almost synchronized. For each class of time-varying networks, we use a corresponding consensus subroutine and incorporate it into extragradient method to get a decentralized optimization algorithm.

\vspace{0.2cm}
\noindent\textbf{Accelerated Gossip with Connected Skeleton and Non-Recoverable Links}.

First, we focus on the type of graphs with connected skeleton. We assume that all graphs in the sequence have a common connected subgraph that we call a \textit{skeleton}. The edges may still appear and disappear, but each node remembers which incident links have failed at least once and stops communicating by that links. This strategy we call \textit{non-recoverable links}. Effectively the communication network only loses edges at each iteration, but remains connected. In other words, the graph of interest can be called "monotonically decreasing".

\begin{assumption} \label{assum:sleketon}
	Graph sequence $\{\mathcal{G}^k = (\mathcal{V}, \mathcal{E}^k)\}_{q = 0}^{\infty}$ has a connected skeleton: there exists a connected graph $\hat{\mathcal{G}} = (\mathcal{V}, \hat{\mathcal{E}})$ such that for all $k \in \mathbb{N} \cup \{ 0 \}$ we have $\hat{\mathcal{E}} \subset \mathcal{E}^k$, $\lambda_{max} (\mathbf{L}(\mathcal{G}^k)) \leq \lambda_{max}$ and $\lambda_{min}^+ \leq \lambda_{min}^+(\hat{\mathcal{G}})$.
\end{assumption}

	


    
    With these properties of the network, we introduce the following algorithm for consensus.

\begin{algorithm}
	\caption{Accelerated Gossip with Non-Recoverable Links (\texttt{AccGossipNonRecoverable})}\label{alg:AccNonRecoverable}
	\begin{algorithmic}
		\Require Vectors $z_1, \dots, z_M$, number of iterations $H$, current communication round number $k_0$. Step sizes $\eta, \beta > 0$.\\
		Construct column vector $\mathbf{z} = (z_1^T \dots z_M^T)^T$. \\
		Set $\mathbf{u}^0 = \mathbf{z}^0 = \mathbf{z}$. \\
		Every node $i = 1, 2, \dots, M$ initializes set of neighbors $\mathcal{N}_i = \mathcal{N}_i^{k_0}$.
		\For {$k = 0, 1, \dots, H - 1$}
		\For {$i = 1, 2, \dots, M$}
		
		Update the set of nodes to which the node communicates: $\mathcal{N}_i = \mathcal{N}_i \cap \mathcal{N}_i^{k_0 + k}$.
		
		$u_i^{k + 1} = z_i^k - \eta\left(|\mathcal{N}_i| z_i^k - \sum_{j \in \mathcal{N}_i} z_j^k\right) $
		
		$z_i^{k+1} = (1 + \beta)u_i^{k+1} - \beta u_i^k$
		\EndFor
		
		\EndFor \\
		\Return $z_1^H, \dots z_M^H$.
	\end{algorithmic}
\end{algorithm}

\begin{lemma} \label{lemma:concensus}
        \textbf{(From the proof of Theorem 4.3 in \cite{metelev2023consensus})}
        Let Assumptions \ref{assum:sleketon} hold and $\{ \hat{z}_m \}_{m=1}^M$ be output of Algorithm \ref{alg:AccNonRecoverable} with input $\{ z_m \}_{m=1}^M$ and step sizes $\eta = 1 / \lambda_{\max}$, $\beta = ( \sqrt{\chi} - 1) / ( \sqrt{\chi} + 1)$, where $\chi = \lambda_{\max} / \lambda_{\min}^+$. Then after $H$ iterations, it holds that
	\begin{equation} \label{eq:algo-output-est}
		\frac{1}{M} \sum_{m = 1}^M \| \hat{z}_m - \bar{{z}} \|^2 \leq \frac{2 \chi}{M} \left( 1 - \frac{1}{\sqrt{\chi}} \right)^H \sum_{m=1}^M \|z_m - \bar z\|^2,
	\end{equation}
	where $\bar z = \frac{1}{M} \sum_{m = 1}^M z_m$.
\end{lemma}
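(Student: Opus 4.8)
The plan is to analyze Algorithm~\ref{alg:AccNonRecoverable} as an instance of Chebyshev-accelerated (heavy-ball) gossip applied to a \emph{fixed} effective gossip matrix, and then control the discrepancy between that matrix and the time-varying updates actually performed. First I would observe that, because links are non-recoverable, the sets $\mathcal{N}_i$ are monotonically non-increasing in $k$. After at most $M^2$ rounds (or more precisely, after all doomed edges have failed once) the neighbor sets stabilize; but more useful is the following: at \emph{every} round $k$, the edge set used is squeezed between $\hat{\mathcal{E}}$ and $\mathcal{E}^{k_0}$, so the one-step operator $\mathbf{u}^{k+1} = (\mathbf{I} - \eta \mathbf{W}^k)\mathbf{z}^k$ uses a Laplacian $\mathbf{W}^k = \mathbf{L}(\mathcal{G}^{k_0+k}) \otimes \mathbf{I}_{n_z}$ that is always positive semidefinite, always has kernel exactly $\mathcal{L}$ (since $\hat{\mathcal{G}} \subseteq \mathcal{G}^{k_0+k}$ is connected), and always satisfies $\lambda_{\min}^+ \le \lambda_{\min}^+(\mathbf{W}^k)$ and $\lambda_{\max}(\mathbf{W}^k) \le \lambda_{\max}$ by Assumption~\ref{assum:sleketon}. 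Hence with $\eta = 1/\lambda_{\max}$ every operator $\mathbf{I} - \eta \mathbf{W}^k$ fixes $\mathcal{L}$, preserves $\mathcal{L}^\perp$, and restricted to $\mathcal{L}^\perp$ has spectrum contained in $[0, 1 - 1/\chi]$.

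The key step is then to run the standard Chebyshev/heavy-ball recursion estimate on $\mathcal{L}^\perp$. Writing $\tilde{\mathbf{z}}^k = \mathbf{z}^k - \bar{\mathbf z}$, $\tilde{\mathbf u}^k = \mathbf{u}^k - \bar{\mathbf z}$ (with $\bar{\mathbf z} \in \mathcal{L}$ the preserved average, constant across iterations since each row-stochastic-type update keeps $\sum_m z_m^k$ fixed when using Laplacians), I would track the pair $(\tilde{\mathbf z}^k, \tilde{\mathbf z}^{k-1})$. The recursion $\tilde{\mathbf z}^{k+1} = (1+\beta)(\mathbf{I} - \eta\mathbf{W}^k)\tilde{\mathbf z}^k - \beta \tilde{\mathbf u}^{k-1}$ is exactly the momentum iteration whose contraction factor, for a matrix with spectral radius on $\mathcal{L}^\perp$ bounded by $1 - 1/\chi$ and with $\beta = (\sqrt\chi-1)/(\sqrt\chi+1)$, is the classical $1 - 1/\sqrt\chi$; this is where the factor $2\chi$ in front of $(1-1/\sqrt\chi)^H$ comes from (it is the condition number of the $2\times 2$ companion matrix, i.e. the usual "$\sqrt{\kappa}$-loss" in heavy-ball). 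Since this is cited "from the proof of Theorem 4.3 in \cite{metelev2023consensus}", I would mostly invoke that argument rather than re-derive the Chebyshev bound; the honest content here is checking that the hypotheses of that theorem are met by our $\mathbf{W}^k$.

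The main obstacle is that the operators $\mathbf{W}^k$ genuinely \emph{change} with $k$, so one cannot simply diagonalize a single matrix and read off Chebyshev polynomials — the momentum analysis must be done with a Lyapunov / energy functional of the form $\|\tilde{\mathbf z}^{k+1} - q\,\tilde{\mathbf z}^k\|^2 + c\|\tilde{\mathbf z}^k - q\,\tilde{\mathbf z}^{k-1}\|^2$ that decreases by a fixed factor under \emph{any} admissible operator, using only the uniform spectral bounds $[0, 1-1/\chi]$ on $\mathcal{L}^\perp$. The subtlety is that heavy-ball with a common momentum parameter is known to be fragile to operator variation; what saves us is that all $\mathbf{W}^k$ share the \emph{same} invariant subspace decomposition $\mathbb{R}^{Mn_z} = \mathcal{L} \oplus \mathcal{L}^\perp$ (guaranteed by the connected skeleton), so the bad directions never mix into $\mathcal{L}$, and a single quadratic Lyapunov function calibrated to the worst-case interval works simultaneously for all $k$. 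Once that functional is shown to contract by $(1 - 1/\sqrt\chi)^2$ per step on $\mathcal{L}^\perp$, summing over $H$ steps and passing back from the Lyapunov norm to $\frac1M\sum_m\|\hat z_m - \bar z\|^2$ (absorbing the equivalence-of-norms constant into the displayed $2\chi$) yields~\eqref{eq:algo-output-est}.
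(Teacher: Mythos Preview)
The paper does not actually prove Lemma~\ref{lemma:concensus}: it is stated with the attribution ``(From the proof of Theorem 4.3 in \cite{metelev2023consensus})'' and then used as a black box in the proof of Theorem~\ref{th:eps-est}. There is therefore no proof here to compare against. At the level of what the paper does --- cite the result and rely on Assumption~\ref{assum:sleketon} to guarantee the hypotheses of \cite{metelev2023consensus} --- your proposal agrees: you verify that each effective (intersected) Laplacian has kernel exactly $\mathcal L$, satisfies the uniform spectral bounds $\lambda_{\min}^+\le\lambda_{\min}^+(\mathbf W^k)$ and $\lambda_{\max}(\mathbf W^k)\le\lambda_{\max}$, and preserves the running average $\bar z$, and then you defer to the cited reference. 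That is exactly the paper's stance, only spelled out.

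Where you go beyond the citation and try to sketch the accelerated-rate argument itself, there is a gap worth flagging. You correctly identify the obstacle --- momentum methods are fragile under operator variation --- but your proposed resolution, that a single quadratic Lyapunov function ``calibrated to the worst-case interval works simultaneously for all $k$'' because all $\mathbf W^k$ share the splitting $\mathcal L\oplus\mathcal L^\perp$, is not sufficient on its own. Sharing the kernel keeps the dynamics in $\mathcal L^\perp$, but within $\mathcal L^\perp$ the matrices do not commute, and heavy-ball with fixed $(\eta,\beta)$ over operators varying arbitrarily inside a fixed spectral window need not retain the accelerated rate; a common quadratic Lyapunov certifying $(1-1/\sqrt\chi)$ per step does not exist in general in that setting. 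The structural ingredient you noted at the outset but did not feed into the Lyapunov step --- that non-recoverable links make the effective Laplacians a PSD-\emph{monotone} sequence $\mathbf W^0\succeq\mathbf W^1\succeq\cdots\succeq\mathbf L(\hat{\mathcal G})$ --- is what distinguishes this setting from generic time-varying gossip and is the lever one would expect \cite{metelev2023consensus} to exploit. As a citation-plus-hypothesis-check your proposal matches the paper; as a standalone argument, the Lyapunov step would need that monotonicity threaded through.
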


Based on Algorithm \ref{alg:AccNonRecoverable}, it is possible to develop a decentralized algorithm for solving the saddle point problem \ref{eq:main_prob} with a sequence of graphs that have a connected skeleton and non-recoverable links

\begin{algorithm}
	\caption{Time-Varying with Non-Recoverable Links Decentralized Extra Step Method}\label{alg:DESM-NonRecoverable}
	\begin{algorithmic}
		\Require Step size $\gamma > 0$; number of \texttt{AccGossipNonRecoverable} steps $H$, communication rounds $K$, number of iterations $N$.\\
		Choose $(x^0, y^0) = z^0 \in \mathcal{Z}, z_m^0 = z^0$.
		\For {$k = 0, 1, 2, \dots, N - 1$}
		
		Each machine $m$ computes  $\hat{z}_m^{k + 1/2} = z_m^k - \gamma \cdot F_m(z_m^k)$
		
		Communication: $\tilde{z}_1^{k + 1/2}, \dots, \tilde{z}_M^{k + 1/2} = \texttt{AccGossipNonRecoverable}(\hat{z}_1^{k + 1/2}, \dots, \hat{z}_M^{k + 1/2}, H)$
		
		Each machine $m$ computes $z_m^{k + 1/2} = \proj_{\mathcal{Z}}(\tilde{z}_m^{k + 1/2}$)
		
		Each machine $m$ computes  $\hat{z}_m^{k + 1} = z_m^k - \gamma \cdot F_m(z_m^{k + 1/2})$
		
		Communication: $\tilde{z}_1^{k + 1}, \dots, \tilde{z}_M^{k + 1} = \texttt{AccGossipNonRecoverable}(\hat{z}_1^{k + 1}, \dots, \hat{z}_M^{k + 1}, H)$
		
		Each machine $m$ computes $z_m^{k + 1} = \proj_{\mathcal{Z}}(\tilde{z}_m^{k + 1}$)    
		\EndFor
	\end{algorithmic}
\end{algorithm}

\begin{theorem} \label{th:eps-est}
	Suppose Assumptions \ref{assum:func-properties}, \ref{assum:sleketon} hold. Let problem \eqref{eq:main_prob} be solved by Algorithm \ref{alg:DESM-NonRecoverable} with $\gamma \leq \frac{1}{4 L_{\max}}$. Then in order to achieve $\varepsilon_0$-approximate consensus at each iteration, it takes 
	\[ H = \left( \sqrt{\chi} \log \left[ \chi \left( 4 + \frac{\frac{1}{2}\|z^0 - z^* \|^2 + \frac{Q^2}{2 L_{\max}^2}}{\varepsilon_0^2} \right) \right] \right)  \text{ communications,}\]
	where $Q^2 = \frac{1}{M} \sum_{m=1}^M \|F_m(z^*) \|^2$ and $z^*$ is a solution of \eqref{eq:main_prob}.
\end{theorem}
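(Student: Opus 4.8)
I would argue by induction on the outer iteration index $k$, carrying two invariants: (i) $\frac1M\sum_{m=1}^M\|z_m^k-\bar z^k\|^2\le\varepsilon_0^2$, where $\bar z^k=\frac1M\sum_m z_m^k$ (the desired $\varepsilon_0$-approximate consensus), and (ii) $\frac1M\sum_{m=1}^M\|z_m^k-z^*\|^2\le\|z^0-z^*\|^2$, a uniform boundedness of the local iterates around the solution. The first invariant is the statement to be propagated; the second is needed because Lemma~\ref{lemma:concensus} converts the \emph{input} dispersion of the vectors handed to \texttt{AccGossipNonRecoverable} into its output dispersion, so that input dispersion must be controlled in terms of the problem data $\|z^0-z^*\|$, $Q$, $L_{\max}$.

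The central computation is the bound on the input dispersion. For the half-step input $\hat z_m^{k+1/2}=z_m^k-\gamma F_m(z_m^k)$, and writing $\bar F^k=\frac1M\sum_j F_j(z_j^k)$, the gossip step preserves the average, so $\hat z_m^{k+1/2}-\overline{\hat z}^{\,k+1/2}=(z_m^k-\bar z^k)-\gamma(F_m(z_m^k)-\bar F^k)$. Applying $\|a+b\|^2\le 2\|a\|^2+2\|b\|^2$, the elementary bound (variance $\le$ second moment), $L_{\max}$-smoothness of each $F_m$ together with $\|F_m(z_m^k)\|^2\le 2L_{\max}^2\|z_m^k-z^*\|^2+2\|F_m(z^*)\|^2$, the definition $Q^2=\frac1M\sum_m\|F_m(z^*)\|^2$, and invariants (i)--(ii), I would get
\[
\frac1M\sum_{m=1}^M\bigl\|\hat z_m^{k+1/2}-\overline{\hat z}^{\,k+1/2}\bigr\|^2
\ \le\ 2\varepsilon_0^2+4\gamma^2L_{\max}^2\|z^0-z^*\|^2+4\gamma^2Q^2
\ \le\ 2\varepsilon_0^2+\tfrac14\|z^0-z^*\|^2+\tfrac{Q^2}{4L_{\max}^2},
\]
the last step using $\gamma\le\frac1{4L_{\max}}$. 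Lemma~\ref{lemma:concensus} then bounds the output dispersion by $2\chi(1-1/\sqrt\chi)^H$ times this quantity, and since $\proj_{\mathcal Z}$ is $1$-Lipschitz it does not increase pairwise distances, hence does not increase the dispersion, so the same estimate survives projection onto $\mathcal Z$. Using $(1-1/\sqrt\chi)^H\le e^{-H/\sqrt\chi}$ and requiring the result to be at most $\varepsilon_0^2$ gives $H\ge\sqrt{\chi}\log\bigl(2\chi(2+\varepsilon_0^{-2}(\tfrac14\|z^0-z^*\|^2+\tfrac{Q^2}{4L_{\max}^2}))\bigr)=\sqrt{\chi}\log[\chi(4+\varepsilon_0^{-2}(\tfrac12\|z^0-z^*\|^2+\tfrac{Q^2}{2L_{\max}^2}))]$, which is exactly the stated $H$. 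The full-step input $\hat z_m^{k+1}=z_m^k-\gamma F_m(z_m^{k+1/2})$ is handled identically, once $z_m^{k+1/2}$ has been shown to satisfy (i)--(ii) within the same induction step; this propagates invariant (i).

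What remains, and what I expect to be the main obstacle, is propagating invariant (ii), i.e. $\frac1M\sum_m\|z_m^{k+1}-z^*\|^2\le\|z^0-z^*\|^2$. The natural route is to pass to the average $\bar z^k$, which follows an \emph{inexact} projected-extragradient recursion for $F$: the inexactness is the gap between $\frac1M\sum_m F_m(z_m^k)$ and $F(\bar z^k)$, which is $O(L\varepsilon_0)$ by $L$-smoothness of $F$ (note $L\le L_{\max}$) and invariant (i), plus the dispersion of the $z_m^k$. The classical projected-extragradient descent inequality (monotonicity of $F$, nonexpansiveness of $\proj_{\mathcal Z}$, and $\gamma L\le\gamma L_{\max}\le\frac14<1$) gives $\|\bar z^{k+1}-z^*\|^2\le\|\bar z^k-z^*\|^2+O(\varepsilon_0)$; to keep these errors from accumulating over all $k$ one uses the strong-monotonicity constant $\mu$ to get a contractive recursion $\|\bar z^{k+1}-z^*\|^2\le(1-c\mu\gamma)\|\bar z^k-z^*\|^2+O(\varepsilon_0^2)$, whose iterates stay below $\|z^0-z^*\|^2$ provided $\varepsilon_0$ is small relative to $\sqrt{\mu\gamma}\,\|z^0-z^*\|$ (or, alternatively, absorbing the extra $O(\varepsilon_0^2)$ into a slightly enlarged constant in (ii), which only shifts the constants $4$ and $\tfrac12$ in $H$). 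Finally $\frac1M\sum_m\|z_m^{k+1}-z^*\|^2=\|\bar z^{k+1}-z^*\|^2+\frac1M\sum_m\|z_m^{k+1}-\bar z^{k+1}\|^2$ combined with the already-established (i) closes the induction. The delicate accounting lies entirely in this last step: keeping the cumulative consensus error harmless while the extragradient iteration runs.
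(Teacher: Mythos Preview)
Your argument is essentially the paper's: bound the pre-gossip dispersion of $\hat z_m^{k+1/2}$ by $2\varepsilon_0^2+4\gamma^2 L_{\max}^2\|z^0-z^*\|^2+4\gamma^2 Q^2$, apply Lemma~\ref{lemma:concensus}, use $\gamma\le\tfrac{1}{4L_{\max}}$ and nonexpansiveness of $\proj_{\mathcal Z}$, and solve for $H$; the constants you obtain match the paper's exactly. If anything you are more careful than the paper on one point: the paper passes from $\tfrac1M\sum_m\|z_m^k-z^*\|^2$ to $\|z^0-z^*\|^2$ in a single unexplained inequality, whereas you correctly isolate this as an invariant (ii) that must be propagated and sketch its maintenance via the inexact strongly-monotone extragradient descent inequality---that extra step is genuinely needed for the induction to close, and the paper leaves it implicit (presumably deferring to the analysis behind Theorem~\ref{th:DESM-est}).
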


\begin{proof}
        \text
	Let us have $\eps_0$-accuracy of consensus after $k$ iterations, i.e.
	\[ \frac{1}{M} \sum_{m=1}^M \|z_m^k - z^k \|^2 \leq \varepsilon_0^2.\]

        We introduce the following notation:
        \begin{align*}
            g_m^k = F_m(z_m^k), \quad g_m^{k+1/2} = F_m(z_m^{k + 1/2}),
        \end{align*}
        and
        \begin{align*}
            z^k = \frac{1}{M} \sum_{m=1}^M z_m^k, \quad z^{k + 1/2} = \frac{1}{M} \sum_{m=1}^{M} z_m^{k+1/2}, \quad  g^k = \frac{1}{M} \sum_{m=1}^M g_m^k, \quad  g^{k+1/2} = \frac{1}{M} \sum_{m=1}^M g_m^{k+1/2}, \\
            \hat z^k = \frac{1}{M} \sum_{m=1}^M \hat z_m^k, \quad  \hat z^{k + 1/2} = \frac{1}{M} \sum_{m=1}^{M} \hat z_m^{k+1/2}, \quad \tilde z^k = \frac{1}{M} \sum_{m=1}^M \tilde z_m^k, \quad  \tilde z^{k + 1/2} = \frac{1}{M} \sum_{m=1}^{M} \tilde z_m^{k+1/2}.
        \end{align*}

        We have
        \begin{align*}
            \frac{1}{M} \sum_{m=1}^M \|g_m^k - g^k\|^2 \leq \frac{1}{M} \sum_{m=1}^M \|g_m^k\|^2.
        \end{align*}
        Let $T = 2 \chi \left( 1 - \frac{1}{\sqrt{\chi}} \right)^H$. Then
        \begin{align*}
            \frac{1}{M} \sum_{m=1}^M \|\tilde{z}_m^{k+1/2} - \tilde{z}^{k+1/2}\|^2 &\leq  \frac{T}{M} \sum_{m=1}^M \|\hat{z}_m^{k+1/2} - \hat{z}^{k+1/2}\|^2 = \frac{T}{M} \sum_{m=1}^M \|z_m^k -\gamma g_m^k - z^k -\gamma g^k\|^2 \\
            & \leq \frac{2T}{M} \sum_{m=1}^M \|z_m^k - z^k \|^2 + \frac{2 T \gamma^2}{M} \sum_{m=1}^M \|g_m^k - g^k \|^2 \\
            & \leq \frac{2T}{M} \sum_{m=1}^M \|z_m^k - z^k \|^2 + \frac{2 T \gamma^2}{M} \sum_{m=1}^M \|g_m^k\|^2 = 2T \varepsilon_0^2 + \frac{2 T \gamma^2}{M} \sum_{m=1}^M \|g_m^k\|^2.
        \end{align*}
        On the other side
        \begin{align*}
            \frac{1}{M} \sum_{m=1}^M \| g_m^k \|^2 & = \frac{1}{M} \sum_{m=1}^M \| F_m(z_m^k)\|^2 \leq \frac{2}{M} \sum_{m=1}^M \| F_m(z_m^k) - F_m(z^*)\|^2 + \frac{2}{M} \sum_{m=1}^M \|F_m(z^*)\|^2 \\
            & \leq 2 L_{\max}^2 \|z_m^k - z^*\|^2 + \frac{2}{M} \sum_{m=1}^M \|F_m(z^*)\|^2 
            \leq 2 L_{\max}^2 \|z_m^0 - z^*\|^2 + \frac{2}{M} \sum_{m=1}^M \|F_m(z^*)\|^2.
        \end{align*}
        Let $Q^2 = \frac{1}{M} \sum_{m=1}^M \|F_m(z^*) \|^2$, then we have
        \begin{align*}
            \frac{1}{M} \sum_{m=1}^M \|z_m^{k+1/2} - z^{k+1/2}\|^2 
            &= \frac{1}{M} \sum_{m=1}^M \|\proj_{\mathcal Z} \tilde {z}_m^{k+1/2} - \proj_{\mathcal Z} \tilde{z}^{k+1/2}\|^2 \leq \frac{1}{M} \sum_{m=1}^M \|\tilde{z}_m^{k+1/2} - \tilde{z}^{k+1/2}\|^2 \\
            &  \leq 2T \left(\varepsilon_0^2 + 2 {\gamma}^2\left( L_{\max}^2 \|z_m^0 - z^*\|^2 + Q^2 \right) \right) \leq 2T \left(\varepsilon_0^2 + \frac{1}{8} \|z_m^0 - z^*\|^2 + \frac{Q^2}{8 L_{\max}^2}\right)\\
            & = \chi \left( 1 - \frac{1}{\sqrt{\chi}} \right)^H \left(4 \varepsilon_0^2 + \frac{1}{2} \|z^0 - z^*\|^2 + \frac{Q^2}{2 L_{\max}^2} \right).
        \end{align*}
	If we take 
        \begin{align*}
        H \geq \sqrt{\chi} \log \left[\chi \left( 4 + \frac{\frac{1}{2}\|z^0 - z^* \|^2 + \frac{Q^2}{2 L_{\max}^2}}{\varepsilon_0^2} \right) \right], 
        \end{align*}
        then
        \begin{align*}
            \frac{1}{M} \sum_{m = 1}^M\|z_m^{k+1/2} - z^{k+1/2}\|^2 \leq \varepsilon_0^2.
        \end{align*}
	Analogically, we get the same estimate for $H$ to ensure that 
	\begin{align*}
  \frac{1}{M} \sum_{m = 1}^M \| z_m^{k+1} - z^{k+1} \|^2 \leq \varepsilon_0^2.
	\end{align*}
	Hence, to achieve accuracy $\varepsilon_0$, we need to perform 
	
	\[ H = \mathcal{O} \left( \sqrt{\chi} \log \left[ \chi \left( 4 + \frac{\frac{1}{2}\|z^0 - z^* \|^2 + \frac{Q^2}{2 L_{\max}^2}}{\varepsilon_0^2} \right) \right] \right) \text{ communications.}\]
\end{proof}

\begin{theorem} \label{th:DESM-est}
    \textbf{(From Theorem 6 in \cite{beznosikov2020distributed})}
	Let $\{z_m^k\}_{k \geq 0}$ denote the iterates of Algorithm \ref{alg:DESM-NonRecoverable} for solving problem \ref{eq:main_prob} . Let Assumptions \ref{assum:func-properties}, \ref{assum:sleketon} be satisfied. Then if $\gamma \leq \frac{1}{4L_{\max}}$ , we have the following estimates
	\[ \| \bar{z}^{N} - z^* \| = \mathcal{O} \left( \|z^0 - z^*\|^2 \exp \left( - \frac{\mu K}{8L \cdot H} \right) \right). \]
\end{theorem}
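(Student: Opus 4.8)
The plan is to show that the average iterate $\bar z^{k}=\frac1M\sum_{m}z_m^k$ evolves, up to a small perturbation, as an \emph{inexact} extragradient step on $F$, and then to feed this into the standard contraction estimate for the extragradient method on strongly monotone operators. The first ingredient is that the consensus subroutine \texttt{AccGossipNonRecoverable} preserves the arithmetic mean of its inputs: summing $u_i^{k+1}=z_i^k-\eta\big(|\mathcal{N}_i|z_i^k-\sum_{j\in\mathcal{N}_i}z_j^k\big)$ over all nodes, the bracketed Laplacian-type terms cancel because the currently active graph is undirected, and the momentum update $z_i^{k+1}=(1+\beta)u_i^{k+1}-\beta u_i^k$ preserves the mean by a short induction. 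Hence the averaged communicated vectors satisfy $\tilde z^{k+1/2}=\hat z^{k+1/2}=\bar z^k-\gamma\cdot\frac1M\sum_m F_m(z_m^k)$ and $\tilde z^{k+1}=\hat z^{k+1}=\bar z^k-\gamma\cdot\frac1M\sum_m F_m(z_m^{k+1/2})$.

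Next I would convert this into a recursion for $w^k:=\bar z^k$. Since $\proj_{\mathcal{Z}}$ is $1$-Lipschitz, the average of the local projections $\frac1M\sum_m\proj_{\mathcal{Z}}(\tilde z_m^{k+1/2})$ differs from $\proj_{\mathcal{Z}}(\tilde z^{k+1/2})$ by at most $\big(\frac1M\sum_m\|\tilde z_m^{k+1/2}-\tilde z^{k+1/2}\|^2\big)^{1/2}\le\varepsilon_0$, the bound being exactly the consensus accuracy furnished by Theorem~\ref{th:eps-est}; likewise, since each $F_m$ is $L_{\max}$-Lipschitz and $\frac1M\sum_m\|z_m^k-\bar z^k\|^2\le\varepsilon_0^2$, the averaged gradient $\frac1M\sum_m F_m(z_m^k)$ lies within $L_{\max}\varepsilon_0$ of $F(\bar z^k)$ (and similarly at the half-step). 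Folding these together with $\gamma L_{\max}\le\tfrac14$ yields
\[
w^{k+1/2}=\proj_{\mathcal{Z}}\big(w^k-\gamma F(w^k)\big)+\delta_1^k,\qquad w^{k+1}=\proj_{\mathcal{Z}}\big(w^k-\gamma F(w^{k+1/2})\big)+\delta_2^k,
\]
with $\|\delta_1^k\|,\|\delta_2^k\|=\mathcal O(\varepsilon_0)$; note that $w^{k+1/2}$ is precisely the point $z^{k+1/2}$ at which the second round of local gradients is evaluated, so no extra mismatch is introduced there.

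Then I would invoke the standard extragradient analysis — this is in essence Theorem~6 of \cite{beznosikov2020distributed}. Because $F$ is $L$-Lipschitz and $\mu$-strongly monotone (Assumption~\ref{assum:func-properties}) and $\gamma\le\tfrac1{4L_{\max}}\le\tfrac1{4L}$ (using $L\le L_{\max}$), the exact step obeys a one-step contraction $\|w^{k+1}-z^*\|^2\le(1-\gamma\mu)\|w^k-z^*\|^2-c\|w^{k+1/2}-w^k\|^2$; inserting the perturbations $\delta_1^k,\delta_2^k$ through Young's inequality and $1$-Lipschitzness of the projection upgrades this to $\|w^{k+1}-z^*\|^2\le(1-\gamma\mu)\|w^k-z^*\|^2+C\varepsilon_0^2$. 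Telescoping over $k=0,\dots,N-1$ gives $\|\bar z^N-z^*\|^2\le(1-\gamma\mu)^N\|z^0-z^*\|^2+\tfrac{C\varepsilon_0^2}{\gamma\mu}$; choosing $\varepsilon_0$ at the scale of the desired accuracy annihilates the residual term, and with $\gamma=\tfrac1{4L_{\max}}$ and $K=2NH$ communication rounds (two consensus calls of $H$ rounds per outer iteration) we get $(1-\gamma\mu)^N\le\exp(-\gamma\mu N)=\exp\big(-\tfrac{\mu K}{8L_{\max}H}\big)$, which is the claimed estimate up to the constant and the $L$-versus-$L_{\max}$ bookkeeping.

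The step I expect to be the main obstacle is the second one: controlling how the $\varepsilon_0$-consensus error propagates through the two successive projected gradient evaluations within a single extragradient iteration, so that the errors enter additively with a constant independent of $N$, and tracking the constants precisely enough that the surviving contraction factor stays strictly below $1$ and the final exponent is $\Theta\big(\mu/(L H)\big)$ per communication round. Establishing the clean one-step inequality $\|w^{k+1}-z^*\|^2\le(1-\gamma\mu)\|w^k-z^*\|^2+C\varepsilon_0^2$ — the delicate point being the absorption of the cross terms generated by $\delta_1^k$ and $\delta_2^k$ without spoiling the contraction — is where the real work lies; once that is in place, the telescoping and the passage from outer iterations to communication rounds are routine.
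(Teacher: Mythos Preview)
The paper does not actually prove this theorem: it is imported verbatim as ``From Theorem~6 in \cite{beznosikov2020distributed}'' with no argument given. Your proposal correctly identifies this and, rather than merely citing, reconstructs the proof along exactly the lines of that reference --- mean-preservation of the gossip step, reduction of the averaged trajectory to an inexact extragradient iteration with $\mathcal{O}(\varepsilon_0)$ perturbations, the one-step contraction $\|w^{k+1}-z^*\|^2\le(1-\gamma\mu)\|w^k-z^*\|^2+C\varepsilon_0^2$, and telescoping with $K=2NH$. This is precisely the mechanism behind the cited result, so your approach and the paper's (deferred) proof coincide; the only caveat is the $L$ versus $L_{\max}$ issue you flag, which also appears in the paper's statement and is inherited from the source.
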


\begin{corollary}
	In the setting of Theorem \ref{th:eps-est} and Theorem \ref{th:DESM-est}, if $H = \mathcal{O} \left(  \sqrt{\chi} \log\chi \log(1/\eps) \right)$, then the number of communication rounds required for Algorithm \ref{alg:DESM-NonRecoverable} to obtain $\varepsilon$-solution is upper bounded by 
	\[ \mathcal{O} \left( \sqrt{\chi} \log\chi \frac{L}{\mu} \log^2\frac{1}{\varepsilon} \right), \] and the number of local computations on each device is upper bounded by
	\[ \mathcal{O} \left(  \frac{L}{\mu} \log \frac{1}{\varepsilon} \right). \]
\end{corollary}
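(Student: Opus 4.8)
The plan is to chain the two results already established in this section. Theorem~\ref{th:DESM-est} gives the outer convergence rate of Algorithm~\ref{alg:DESM-NonRecoverable} as a function of the inner gossip length $H$, and Theorem~\ref{th:eps-est} tells us how large $H$ must be to keep every iterate $\eps_0$-consensual; the corollary follows by picking $\eps_0$ polynomially small in $\eps$ and substituting.

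First I would rearrange the bound of Theorem~\ref{th:DESM-est}. Imposing $\|\bar z^{N} - z^*\| \le \eps$ in
\[ \|\bar z^{N} - z^*\| = \cO\!\left( \|z^0 - z^*\|^2 \exp\!\left( -\tfrac{\mu K}{8 L H} \right) \right) \]
forces $\tfrac{\mu K}{8 L H} \ge \log\tfrac{\|z^0 - z^*\|^2}{\eps}$, i.e. $K = \cO\!\big( \tfrac{L H}{\mu} \log\tfrac1\eps \big)$, where $\|z^0-z^*\|$ is treated as an absolute constant since it sits only under a logarithm. Because each outer iteration issues two calls to \texttt{AccGossipNonRecoverable} of length $H$, we have $K = 2NH$, so $N = \Theta(K/H) = \cO\!\big( \tfrac{L}{\mu} \log\tfrac1\eps \big)$; each outer iteration evaluates $F_m$ exactly twice per node, hence the per-device oracle complexity is $2N = \cO\!\big( \tfrac{L}{\mu}\log\tfrac1\eps \big)$, which is the second claim.

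Next I would pin down $H$ using Theorem~\ref{th:eps-est}: $\eps_0$-consensus at every iteration holds once
\[ H \ge \sqrt{\chi}\,\log\!\left[ \chi\Big( 4 + \tfrac{\tfrac12\|z^0-z^*\|^2 + Q^2/(2L_{\max}^2)}{\eps_0^2} \Big) \right]. \]
It suffices to take $\eps_0$ of order a fixed power of $\eps$ — the outer analysis behind Theorem~\ref{th:DESM-est} needs the consensus tolerance only to shrink polynomially with the target accuracy — so that, absorbing $\|z^0-z^*\|, Q, L_{\max}$ into constants, the bracketed argument is $\mathrm{poly}(\chi,1/\eps)$ and its logarithm is $\cO(\log\chi + \log\tfrac1\eps) = \cO(\log\chi\,\log\tfrac1\eps)$. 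Thus $H = \cO(\sqrt\chi\,\log\chi\,\log\tfrac1\eps)$, matching the hypothesis of the statement, and substituting into $K = \cO\!\big( \tfrac{LH}{\mu}\log\tfrac1\eps \big)$ gives
\[ K = \cO\!\left( \frac{L}{\mu}\log\frac1\eps \cdot \sqrt\chi\,\log\chi\,\log\frac1\eps \right) = \cO\!\left( \sqrt\chi\,\log\chi\,\frac{L}{\mu}\,\log^2\frac1\eps \right), \]
the first claim.

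The computations here are routine; the one point that needs care is the bookkeeping between $H$, $N = \Theta(K/H)$ and $K$, together with verifying that the per-round consensus tolerance $\eps_0$ demanded by the outer extra-step method need only be polynomially small in $\eps$. This is exactly what keeps the inner logarithm at $\cO(\log\chi\,\log\tfrac1\eps)$ and hence $H$ at the stated order; were $\eps_0$ required to be, say, exponentially small in $1/\eps$, the communication bound would degrade, so this is the step I would check most carefully against the hypotheses of Theorems~\ref{th:eps-est} and~\ref{th:DESM-est}.
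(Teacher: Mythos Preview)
Your proposal is correct and follows exactly the intended (but omitted) argument: the paper states this corollary without proof, expecting the reader to chain Theorem~\ref{th:eps-est} and Theorem~\ref{th:DESM-est} in precisely the way you describe, namely solving the outer bound for $K$ in terms of $H$, reading off $N = K/(2H)$ for the oracle complexity, and then substituting the $H = \cO(\sqrt{\chi}\log\chi\log(1/\eps))$ obtained by taking $\eps_0$ polynomially small in $\eps$. Your flagged checkpoint --- that the extra-step analysis underlying Theorem~\ref{th:DESM-est} only requires $\eps_0$ polynomially small in $\eps$ --- is the right one, and it is indeed what the cited result from \cite{beznosikov2020distributed} provides.
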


\vspace{0.2cm}
\noindent\textbf{Consensus for networks with Markovian changes}

This subsection is devoted to slowly time-varying graphs with random changes satisfying Markovian law. At each iteration, several randomly chosen edges may appear or disappear. The choice of edges depends only on the current graph topology, so the sequence of graphs is a Markovian process.

Let introduce some requirements for time-varying network with Markovian changes.

\begin{assumption} \label{assum:markov-chain}
    The communication network satisfy the following conditions
    \begin{enumerate}
        \item $\{ W^k \}_{k=0}^\infty$ is a stationary Markov chain on $(W_G, W_\sigma)$, where $W_G$ is the set of all possible gossip matrices for the network and $W_{\sigma}$ is the $\sigma$-field on $W_G$ and the chain $\{ W^k \}_{k=0}^\infty$ has a Markov kernel $Q$ and a unique stationary distribution $\pi$.
        \item $Q$ is uniformly geometrically ergodic with mixing time $\tau \in \mathbb{N}$, i.e., for every $m \in \mathbb N$, 
	\[ \Delta (Q^m) = \sup_{W, W' \in W_G} (1/2) \| Q^m(W, \cdot) - Q^m(W', \cdot) \|_{TV} \leq    
        (1/4)^{\lfloor m / \tau \rfloor}.\]
        \item For all $k \in \mathbb N \cup \{0\}$, it holds $\mathbb E_\pi [ W(q) ] = \bar W$ and the $\bar W$ satisfies Assumption \ref{assum:gossip}. 
        
        Denote $\lambda_{\max} = \lambda_{\max}(\bar W)$, $\lambda_{\min}^+ = \lambda_{\min}^+(\bar W)$, $\chi = \frac{\lambda_{\max}}{\lambda_{\min}}$. 
        \item For any graph $\mathcal{G}$ that can appear in the network it holds:
	\[ \| W(\mathcal{G}) - \bar{W} \| \leq \rho. \]
    \end{enumerate}
\end{assumption}

Consider the consensus search problem:

\begin{equation} \label{eq:markov-consensus}
    \begin{aligned}
        \min_{\mathbf{z} \in \mathbb{R}^{M n_z}} & \left[ r(\mathbf{z}) = \left\| \left(\sqrt{\bar W } \otimes \mathbf{I}_{n_z}\right) \mathbf{z} \right\|^2 \right] \\
        \textrm{s.t.} & \sum_{m = 1}^M z_m = \sum_{m=1}^M z_m^0 
    \end{aligned},
\end{equation}
where $\mathbf{z} = (z_1^T, \dots, z_M^T)^T$.

\begin{algorithm} 
	\caption{Accelerated consensus over graphs with Markovian changes (\texttt{ACOGWMC})}\label{alg:ACOGWMC}
	\begin{algorithmic}
		\Require stepsize $\gamma > 0$, momentums $\theta, \eta, \beta, p$, number of iterations $N$, batchsize limit $S$.\\
		Choose $z_f^0 = z^0, T^0 = 0$, set the same random seed for generating $\{J_k\}$ on all devices
		\For {$k = 0, 1, \dots, N - 1$}
		
		$z_g^k = \theta z_f^k + (1 - \theta) z^k$
		
		Sample $J_k \sim \text{Geom}(1/2)$
		
		Send $z_g^k$ to neighbors in the networks $\{ \mathcal{G}^{T^k+i} \}_{i=1}^{2^{J_k} B}$
		
		Compute $g^k = g_0^k + 
		\begin{cases}
			2^{J_k} \left( g_{J_k}^k - g_{J_k - 1} ^ k \right), & \text{if} \; 2^{J_k} \leq S \\
			0, & \text{otherwise}
		\end{cases}
		$
		
		with $g_j^k = 2^{-j}B^{-1}\sum_{i = 1}^{2^j B} W^{T^k+i} z_g^k$
		
		$z_f^{k+1} = z_g^k - p \gamma g^k$
		
		$z^{k+1} = \eta z_f^{k+1} + (p - \eta)z_f^k + (1-p)(1-\beta)z^k + (1-p)\beta z_g^k$
		
		$T^{k+1} = T^k + 2^{J_k} B$
		\EndFor
	\end{algorithmic}
\end{algorithm}

\begin{theorem} \label{th:markov-consensus-est}
    \textbf{(Theorem 1 from \cite{metelev2023decentralized})}
	Let Assumptions \ref{assum:markov-chain} hold. Let problem \eqref{eq:markov-consensus} be solved by Algorithm \ref{alg:ACOGWMC}. Then for any $b \in \mathbb{N}$, \[\gamma \in \left( 0; \min \left\{ \frac{3}{4 \lambda_{\max}} ; \frac{\lambda_{\min}^3}{[1800 \rho^2(\tau b^{-1} + \tau^2 b^{-2})]^2} \right\}\right),\] and $\beta, \theta, \eta, p, S, B$ satisfying \[ p = \frac{1}{4}, \, \beta = \sqrt{\frac{4p^2 \mu \gamma}{3}}, \, \eta = \frac{3\beta}{p \mu \gamma} = \sqrt{\frac{12}{\mu \gamma}}, \, \theta = \frac{p\eta^{-1} - 1}{\beta p \eta^{-1} - 1}\]
	\[ S = \max \left\{2; \sqrt{\frac{1}{4} \left(1 + \frac{2}{\beta} \right)}\right\}, \, B = \lceil b \,\log_2 S \rceil, \]
	it holds that
	\begin{align*}
		&\mathbb{E} \left[ \|z^N - z^*\|^2 + \frac{24}{\lambda_{\min}} (r(z_f^N) - r(z^*))\right] \\ &\quad= \mathcal{O} \left( \exp \left( -N \sqrt{\frac{p^2 \lambda_{\min} \gamma}{3}} \right) \left[\|z^0 - z^*\|^2 + \frac{24}{\lambda_{\min}} (r(z^0) - r(z^*)) \right]  \right),
	\end{align*}
	where $z^*_{m} = \frac{1}{M} \sum_{i=1}^M z_{i}$ for $m = 1, \dots, M$.
\end{theorem}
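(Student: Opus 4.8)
The plan is to recognize Algorithm~\ref{alg:ACOGWMC} as an accelerated stochastic gradient method applied to the quadratic objective $r$ in \eqref{eq:markov-consensus}, and to analyze it through the Lyapunov function $\Psi^k = \|z^k - z^*\|^2 + \frac{24}{\lambda_{\min}}\big(r(z_f^k) - r(z^*)\big)$, which is exactly the quantity in the conclusion. First I would record the geometry: on the affine set $\{\mathbf{z} : \sum_m z_m = \sum_m z_m^0\}$ the function $r(\mathbf{z}) = \|(\sqrt{\bar W}\otimes\mathbf{I})\mathbf{z}\|^2$ is a convex quadratic with $\nabla r(\mathbf{z}) = 2(\bar W\otimes\mathbf{I})\mathbf{z}$, it is (up to the factor $2$) $\lambda_{\min} = \lambda_{\min}^+(\bar W)$-strongly convex and $\lambda_{\max}$-smooth on $\mathcal{L}^\perp$, its minimizer on that set is the consensus point $z^*$ with $z^*_m = \frac1M\sum_i z_i$, and every update in the algorithm preserves $\sum_m z_m$ because $\mathbf{1}^T W = 0$. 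The momentum parameters $p,\beta,\eta,\theta$ in the statement are precisely the Nesterov-type choices for a $\mu$-strongly-convex, $(1/\gamma)$-smooth problem with $\mu=\lambda_{\min}$, so in the \emph{exact-gradient} case the triple $(z^k,z_f^k,z_g^k)$ contracts $\Psi^k$ by $1 - \sqrt{p^2\lambda_{\min}\gamma/3}$; this is a routine "similar triangles"/estimating-sequences computation that I would isolate as a deterministic lemma.

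The crux is controlling the stochastic gradient $g^k$. Here $g_j^k = 2^{-j}B^{-1}\sum_{i=1}^{2^j B} W^{T^k+i}z_g^k$ is a Markov-chain average which, by the uniform geometric ergodicity of Assumption~\ref{assum:markov-chain}, converges to $(\bar W\otimes\mathbf{I})z_g^k = \frac12\nabla r(z_g^k)$ as $j\to\infty$, and the geometric sampling $J_k\sim\mathrm{Geom}(1/2)$ with the multilevel telescoping $g^k = g_0^k + \mathbf{1}[2^{J_k}\le S]\,2^{J_k}\big(g_{J_k}^k - g_{J_k-1}^k\big)$ is a truncated Blanchet--Glynn estimator of that limit. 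Conditionally on the history $\mathcal{F}_k$ I would establish two bounds: (i) a \emph{bias} estimate $\big\|\mathbb{E}[g^k\mid\mathcal{F}_k] - (\bar W\otimes\mathbf{I})z_g^k\big\| \lesssim (\text{truncation tail at }S)\cdot\|z_g^k - z^*\|$, which decays geometrically in $\log_2 S$ once the missing levels satisfy $2^j B \gg \tau$; and (ii) a \emph{second-moment} estimate $\mathbb{E}\big[\|g^k - (\bar W\otimes\mathbf{I})z_g^k\|^2\mid\mathcal{F}_k\big] \lesssim \rho^2\big(\tau B^{-1} + \tau^2 B^{-2}\big)\|z_g^k - z^*\|^2$, obtained by expanding $\mathbb{E}\|2^{J_k}(g_{J_k}^k - g_{J_k-1}^k)\|^2 = \sum_j 2^j\,\mathbb{E}\|g_j^k - g_{j-1}^k\|^2$ and bounding each increment via the decay of Markov covariances together with $\|W(\mathcal{G}) - \bar W\|\le\rho$. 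With $B = \lceil b\log_2 S\rceil$ the factor $\tau B^{-1}+\tau^2 B^{-2}$ is at most $\tau b^{-1}+\tau^2 b^{-2}$, which is exactly why the ceiling $\gamma \le \lambda_{\min}^3/[1800\rho^2(\tau b^{-1}+\tau^2 b^{-2})]^2$ appears: it forces $\gamma$ times the variance coefficient below the strong-convexity margin.

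Finally I would merge the pieces: substitute the inexact gradient into the deterministic Lyapunov recursion, take conditional expectation, and use (i)--(ii) to obtain $\mathbb{E}[\Psi^{k+1}\mid\mathcal{F}_k] \le \big(1 - \sqrt{p^2\lambda_{\min}\gamma/3}\big)\Psi^k$ --- the bias term being absorbed by Young's inequality (its smallness in $S$ makes it negligible next to the contraction) and the variance term by the step-size restriction; iterating the tower property then yields the claimed $\exp(-N\sqrt{p^2\lambda_{\min}\gamma/3})$ rate. The main obstacle is steps (i)--(ii): one must carefully sum $\sum_{i,i'}\mathrm{Cov}(W^{T^k+i},W^{T^k+i'})$ over a batch using the $(1/4)^{\lfloor m/\tau\rfloor}$ mixing bound, track the additional $\tau^2 B^{-2}$ contribution from the variance of the batch mean itself, verify that truncating at $S$ costs only a logarithmic factor in $1/\gamma$, and then check that all absolute constants (the $1800$, and the weight $24/\lambda_{\min}$ in $\Psi^k$) line up so the recursion actually closes.
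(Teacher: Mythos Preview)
The paper does not prove this theorem at all: it is quoted verbatim as ``Theorem 1 from \cite{metelev2023decentralized}'' and used as a black box, with no argument given beyond the citation. So there is no proof in the present paper to compare your proposal against.

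That said, your sketch is a faithful outline of the argument one expects to find in the cited source. You correctly identify Algorithm~\ref{alg:ACOGWMC} as a Nesterov-accelerated stochastic method on the quadratic $r$, with the randomized multilevel (Blanchet--Glynn) construction $g^k = g_0^k + 2^{J_k}(g_{J_k}^k - g_{J_k-1}^k)$ used to debias the Markov-chain batch averages toward $\bar W z_g^k$. The two technical ingredients you single out --- a bias bound coming from the truncation at $S$ and a second-moment bound of order $\rho^2(\tau B^{-1} + \tau^2 B^{-2})\|z_g^k - z^*\|^2$ obtained by summing mixing covariances --- are exactly what drives the step-size restriction $\gamma \le \lambda_{\min}^3/[1800\rho^2(\tau b^{-1}+\tau^2 b^{-2})]^2$, and plugging them into the standard accelerated Lyapunov recursion closes at the stated rate. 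One small point: the gradient of $r$ as written is $\nabla r(\mathbf z) = 2(\bar W\otimes\mathbf I)\mathbf z$, so the effective smoothness and strong-convexity constants carry an extra factor of $2$; this is harmless for the $\mathcal O(\cdot)$ conclusion but worth tracking if you intend to reproduce the explicit constants. In short, your plan is sound, but in the context of \emph{this} paper no proof is required beyond the citation.
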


\begin{corollary}
	In the setting of Theorem \ref{th:markov-consensus-est}, if $b = \tau$ and $\gamma \simeq \text{\normalfont{min}} \left\{ \frac{1}{\lambda_{\text{\normalfont{max}}}}; \frac{\lambda_{\text{\normalfont{min}}}^3}{\rho^4} \right\}$, then in order to achieve $\varepsilon$-approximate solution (in terms of $\mathbb{E} [\|z - z^*\|^2] \lesssim \varepsilon$) it takes
	\[ \tilde{\mathcal{O}} \left( \tau \left[ \sqrt{\chi} + \frac{\rho^2}{\lambda_{\text{\normalfont{min}}}^2} \log \frac{1}{\varepsilon} \right] \right) \text{ communications.}\]
\end{corollary}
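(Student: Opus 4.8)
The plan is to read off the iteration count $N$ from the linear-rate estimate in Theorem~\ref{th:markov-consensus-est} and then convert iterations into communication rounds, the latter being the only nontrivial part.

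First I would fix $p=\tfrac14$ as prescribed, so that the contraction factor in Theorem~\ref{th:markov-consensus-est} becomes $\exp\!\big(-N\sqrt{\lambda_{\min}\gamma/48}\big)$. Since the Lyapunov function $\|z^N-z^*\|^2+\tfrac{24}{\lambda_{\min}}\big(r(z_f^N)-r(z^*)\big)$ dominates $\|z^N-z^*\|^2$ (the second summand is nonnegative, $z^*$ being a minimiser of $r$ on the affine constraint set of \eqref{eq:markov-consensus}), it suffices to drive this potential below $\varepsilon$, which needs
\[
 N=\mathcal{O}\!\left(\frac{1}{\sqrt{\lambda_{\min}\gamma}}\,\log\frac{D_0}{\varepsilon}\right),\qquad D_0:=\|z^0-z^*\|^2+\tfrac{24}{\lambda_{\min}}\big(r(z^0)-r(z^*)\big),
\]
with $\log D_0$ and the other problem-dependent logarithms absorbed into $\tilde{\mathcal{O}}(\cdot)$. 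Next I substitute the prescribed $\gamma$: with $b=\tau$ one has $\tau b^{-1}+\tau^2 b^{-2}=2$, so the admissible range in Theorem~\ref{th:markov-consensus-est} is $\big(0;\min\{3/(4\lambda_{\max}),\ \lambda_{\min}^3/(3600\rho^2)^2\}\big)$ and $\gamma\simeq\min\{1/\lambda_{\max},\ \lambda_{\min}^3/\rho^4\}$ is feasible up to absolute constants; then
\[
 \frac{1}{\sqrt{\lambda_{\min}\gamma}}\simeq\max\!\left\{\sqrt{\tfrac{\lambda_{\max}}{\lambda_{\min}}},\ \sqrt{\tfrac{\rho^4}{\lambda_{\min}^4}}\right\}=\max\!\left\{\sqrt{\chi},\ \tfrac{\rho^2}{\lambda_{\min}^2}\right\},
\]
giving $N=\tilde{\mathcal{O}}\!\big((\sqrt{\chi}+\rho^2/\lambda_{\min}^2)\log(1/\varepsilon)\big)$.

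Finally I convert iterations to communications. At iteration $k$ Algorithm~\ref{alg:ACOGWMC} consumes $2^{J_k}B$ graph instances with $J_k\sim\mathrm{Geom}(1/2)$ i.i.d. and $B=\lceil b\log_2 S\rceil=\lceil\tau\log_2 S\rceil$, so the total is $T^N=\sum_{k=0}^{N-1}2^{J_k}B$. The crucial observation is that $\mathbb{E}[2^{J_k}]$ diverges — which is exactly why the estimator is truncated at the batchsize limit $S$ — so the effective number of rounds per iteration is $\min\{2^{J_k},S\}\,B$, and $\mathbb{E}\big[\min\{2^{J_k},S\}\big]\le 1+\log_2 S$. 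Moreover $S=\max\{2,\sqrt{\tfrac14(1+2/\beta)}\}\simeq\beta^{-1/2}\simeq(\lambda_{\min}\gamma)^{-1/4}\simeq\max\{\chi^{1/4},\rho/\lambda_{\min}\}$, so $\log_2 S$ is polylogarithmic in $\chi,\rho,\lambda_{\min}$ and disappears into $\tilde{\mathcal{O}}$. Hence the expected number of communication rounds per iteration is $\tilde{\mathcal{O}}(\tau)$, and
\[
 \mathbb{E}[T^N]=\tilde{\mathcal{O}}(\tau)\cdot N=\tilde{\mathcal{O}}\!\left(\tau\Big(\sqrt{\chi}+\tfrac{\rho^2}{\lambda_{\min}^2}\Big)\log\frac{1}{\varepsilon}\right),
\]
the claimed bound; a high-probability statement follows by Markov's inequality on $T^N$ using independence of the $J_k$.

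The main obstacle is precisely this last bookkeeping: one must tame the heavy tail of the geometric batch sizes through the cap $S$ (noting that truncating the gradient estimator is already built into the parameter choices of Theorem~\ref{th:markov-consensus-est}, so its hypotheses need no extra care), verify that $S$ and $B$ are merely polylogarithmic in the conditioning parameters, and confirm that the accuracy notion $\mathbb{E}\|z-z^*\|^2\lesssim\varepsilon$ is genuinely controlled by the Lyapunov function of Theorem~\ref{th:markov-consensus-est}. Everything else is a direct substitution of the prescribed $\gamma$, $b$ and $p$.
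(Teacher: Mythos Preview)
The paper states this corollary without proof, as an immediate consequence of Theorem~\ref{th:markov-consensus-est}; your derivation supplies precisely the argument that is implicit there. The substitution $b=\tau$, $\gamma\simeq\min\{1/\lambda_{\max},\lambda_{\min}^3/\rho^4\}$ into the contraction rate to obtain $N$, and then the conversion of iterations to communications via $\mathbb{E}[\min\{2^{J_k},S\}]\cdot B=\tilde{\mathcal O}(\tau)$, is exactly the intended route.

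One small remark: your final bound places the factor $\log(1/\varepsilon)$ outside the bracket, i.e.\ multiplying both $\sqrt{\chi}$ and $\rho^2/\lambda_{\min}^2$, whereas the corollary as stated attaches it only to the second term. Your version is what actually falls out of the computation (the iteration count $N$ carries $\log(1/\varepsilon)$ and multiplies the per-iteration cost $\tilde{\mathcal O}(\tau)$ uniformly); the asymmetric placement in the printed statement appears to be a typographical slip, and in any case the two agree once the stray logarithm on the first term is absorbed into $\tilde{\mathcal O}$.
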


\begin{algorithm}
	\caption{Time-Varying Decentralized Extra Step Method with Markovian changes}\label{alg:DESM-markov}
	\begin{algorithmic}
		\Require Step size $\gamma \leq \frac{1}{4L}$, number of \texttt{AccGossipNonRecoverable} steps $H$, number of iterations $N$.\\
		Choose $(x^0, y^0) = z^0 \in \mathcal{Z}, z_m^0 = z^0$.
		\For {$k = 0, 1, 2, \dots, N$}
		
		Each machine $m$ computes  $\hat{z}_m^{k + 1/2} = z_m^k - \gamma \cdot F_m(z_m^k)$
		
		Communication: $\tilde{z}_1^{k + 1/2}, \dots, \tilde{z}_M^{k + 1/2} = \texttt{ACOGWMC}
		(\hat{z}_1^{k + 1/2}, \dots, \hat{z}_M^{k + 1/2}, H)$
		
		Each machine $m$ computes $z_m^{k + 1/2} = \text{proj}_{\mathcal{Z}}(\tilde{z}_m^{k + 1/2}$)
		
		Each machine $m$ computes  $\hat{z}_m^{k + 1} = z_m^k - \gamma \cdot F_m(z_m^{k + 1})$
		
		Communication: $\tilde{z}_1^{k + 1}, \dots, \tilde{z}_M^{k + 1} = \texttt{ACOGWMC}
		(\hat{z}_1^{k + 1}, \dots, \hat{z}_M^{k + 1}, H)$
		
		Each machine $m$ computes $z_m^{k + 1} = \text{proj}_{\mathcal{Z}}(\tilde{z}_m^{k + 1}$)    
		\EndFor
	\end{algorithmic}
\end{algorithm}

\begin{theorem}
	Let Assumptions \ref{assum:func-properties}, \ref{assum:markov-chain} hold. Let problem \eqref{eq:main_prob} be solved by Algorithm \ref{alg:DESM-markov}. Then, if $\gamma \leq \frac{1}{4 L_{\max}}$ and $H = \mathcal{O}  \left( \tau \left[ \sqrt{\chi} + \frac{\rho^2}{\lambda_{\text{\normalfont{min}}}^2} \text{\normalfont{log}} \frac{1}{\varepsilon} \right] \right)$ it holds that to achieve $\varepsilon$-solution (in terms of $\mathbb{E} [f(z) - f(z^*)] \lesssim \varepsilon$) it takes
	\[ \tilde{\mathcal{O}} \left( \tau \left[ \sqrt{\chi} + \frac{\rho^2}{\lambda_{\text{\normalfont{min}}}^2} \right] \frac{L}{\mu} \; \text{log}^2\frac{1}{\varepsilon} \right) \textit{communications and}\]
	\[ \mathcal{O} \left(  \frac{L}{\mu} \; \text{log}\frac{1}{\varepsilon} \right) \textit{local computations on each node.} \]
\end{theorem}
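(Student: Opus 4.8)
The plan is to mirror the structure of the proof of Theorem~\ref{th:eps-est}, replacing the deterministic consensus guarantee of Lemma~\ref{lemma:concensus} by the expected consensus guarantee of Theorem~\ref{th:markov-consensus-est} (via its Corollary), and then to feed the resulting approximate-consensus bound into the extragradient convergence estimate of Theorem~\ref{th:DESM-est}. First I would fix an iteration $k$ and assume $\frac{1}{M}\sum_{m}\mathbb{E}\|z_m^k - z^k\|^2 \le \varepsilon_0^2$. Using $\hat z_m^{k+1/2} = z_m^k - \gamma F_m(z_m^k)$ and the inequality $\|a+b\|^2 \le 2\|a\|^2 + 2\|b\|^2$, I would bound the deviation of the inputs to \texttt{ACOGWMC} by $2\varepsilon_0^2 + 2\gamma^2 \cdot \frac{1}{M}\sum_m \|F_m(z_m^k)\|^2$, and then bound $\frac{1}{M}\sum_m\|F_m(z_m^k)\|^2$ by $2L_{\max}^2\|z_m^0 - z^*\|^2 + 2Q^2$ exactly as in Theorem~\ref{th:eps-est}, using $L_{\max}$-smoothness of $F_m$, $z_m^0 = z^0$, and $\gamma \le 1/(4L_{\max})$ to absorb the $\gamma^2$ factor. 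This reduces the task to showing that, after $H$ rounds of \texttt{ACOGWMC}, the expected consensus error contracts by enough to return below $\varepsilon_0^2$.

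Next I would invoke Theorem~\ref{th:markov-consensus-est} (or directly its Corollary): with $b = \tau$ and $\gamma$ chosen as in the Corollary, running the Markovian consensus subroutine for $H = \tilde{\mathcal O}\!\left(\tau\bigl[\sqrt{\chi} + \frac{\rho^2}{\lambda_{\min}^2}\log\frac{1}{\varepsilon}\bigr]\right)$ communication rounds drives $\mathbb{E}\|z^H - z^*\|^2$ (with $z^*_m$ the average of the inputs) below $\varepsilon_0^2$, since the initial error is of order $\varepsilon_0^2 + \|z^0 - z^*\|^2 + Q^2/L_{\max}^2$ and the subroutine converges linearly. The projection step $z_m^{k+1/2} = \proj_{\mathcal Z}(\tilde z_m^{k+1/2})$ is nonexpansive, so it only decreases the consensus deviation; hence $\frac{1}{M}\sum_m \mathbb{E}\|z_m^{k+1/2} - z^{k+1/2}\|^2 \le \varepsilon_0^2$, and the identical argument gives the same bound at the full step $k+1$. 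By induction on $k$, $\varepsilon_0$-approximate consensus (in expectation) is maintained throughout, for this choice of $H$.

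Finally, I would plug this into Theorem~\ref{th:DESM-est}: with $\varepsilon_0$-accurate consensus at every iteration and $\gamma \le 1/(4L_{\max})$, the extragradient-with-inexact-averaging scheme satisfies $\mathbb{E}[f(\bar z^N) - f(z^*)] = \mathcal{O}\bigl(\|z^0-z^*\|^2 \exp(-\mu N/(8L))\bigr)$ up to consensus-error terms controlled by $\varepsilon_0$. Choosing $N = \mathcal{O}\bigl(\frac{L}{\mu}\log\frac{1}{\varepsilon}\bigr)$ outer iterations and $\varepsilon_0 \simeq \varepsilon$ (which only changes $H$ by a $\log(1/\varepsilon)$ factor, already present) yields the claimed $\tilde{\mathcal O}\bigl(N H\bigr) = \tilde{\mathcal O}\bigl(\tau[\sqrt\chi + \rho^2/\lambda_{\min}^2]\,\frac{L}{\mu}\log^2\frac{1}{\varepsilon}\bigr)$ communication rounds and $\mathcal{O}\bigl(\frac{L}{\mu}\log\frac{1}{\varepsilon}\bigr)$ local oracle calls per node (two $F_m$ evaluations per outer iteration). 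The main obstacle I anticipate is bookkeeping the stochastic layer: Theorem~\ref{th:markov-consensus-est} is an in-expectation statement over the Markov chain, so the induction must carry \emph{expected} consensus errors, and one must make sure the extragradient analysis of Theorem~\ref{th:DESM-est} tolerates a per-iteration consensus bound that holds only in expectation rather than almost surely — handling the conditional expectations and the coupling between the randomness of the chain across different outer iterations is the delicate point, whereas the smoothness/strong-monotonicity estimates are routine.
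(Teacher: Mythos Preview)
The paper does not actually supply a proof for this theorem: it is stated immediately after Algorithm~\ref{alg:DESM-markov} and the paper then moves straight to Section~\ref{sec:lower_bounds}. The argument the paper \emph{implicitly} has in mind is precisely the one you outline --- redo the proof of Theorem~\ref{th:eps-est} verbatim, replacing the deterministic contraction of Lemma~\ref{lemma:concensus} by the in-expectation contraction from the Corollary to Theorem~\ref{th:markov-consensus-est}, and then invoke Theorem~\ref{th:DESM-est} to count outer iterations --- so your plan matches the intended route.

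Your closing caveat is exactly the place where the paper is silent and where real work would be needed: Theorem~\ref{th:markov-consensus-est} controls $\mathbb E\|z^H-z^*\|^2$ for the consensus subproblem, so the induction on $k$ must propagate an expected bound, and the extragradient estimate quoted as Theorem~\ref{th:DESM-est} (Theorem~6 in \cite{beznosikov2020distributed}) is stated for a deterministic per-iteration consensus error $\varepsilon_0$, not an expected one. To make this rigorous you would either (i) take conditional expectations given the Markov-chain history up to the start of each \texttt{ACOGWMC} call and track $\mathbb E[\cdot\mid\mathcal F_k]$ through the extragradient recursion, or (ii) use Markov's inequality plus a union bound over the $N=\mathcal O\!\left(\frac{L}{\mu}\log\frac{1}{\varepsilon}\right)$ calls to turn the expected consensus guarantee into a high-probability one, at the cost of an extra $\log N$ inside $H$ (harmless inside $\tilde{\mathcal O}$). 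Either way the coupling issue you mention --- the chain's state at the end of one \texttt{ACOGWMC} call is the initial state of the next --- is handled by the uniform geometric ergodicity in Assumption~\ref{assum:markov-chain}(2), since the bound in Theorem~\ref{th:markov-consensus-est} is uniform over the starting state. This is bookkeeping rather than a new idea, but it is not written anywhere in the paper.
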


\section{Lower bounds}\label{sec:lower_bounds}

The previous section was devoted to applying new consensus algorithms for specific types of time-varying networks: networks with connected skeleton and graphs changing according to Markovian law. In this section, we show that without specific constraints on the network change, such as connected skeleton or Markovian changes, acceleration is not obtained on generalised slowly changing graphs, even if the constraints on the rate of change are very high. That is, under constant network constraints, when at most a constant number of edges in the network per iteration is changed, the worst-case dependence on $\chi$ cannot be improved in comparison to arbitrary changing networks. In particular, we show that it sufficient to change only two edges per iteration.

We start with the definition of black-box procedure class of algorithms on which we will evaluate the lower bound.

\begin{definition}
    An algorithm with $T$ local iterations and $K$ communication rounds that satisfies following properties is called a \textit{black-box procedure}, denoted by $\textbf{BBP}(T, K)$.
    
    Each node $m$ maintains a local memory with $\mathcal{M}^x_m$ and $\mathcal{M}^y_m$ for the $x$- and $y$-variables, which are initialized as $\mathcal{M}^x_m = \mathcal{M}^y_m = \{0\}$. $\mathcal{M}^x_m$ and $\mathcal{M}^y_m$ are updated as follows:
    \begin{itemize} 
    \item \textbf{Local computation:} Each node $m$ computes and adds to its $M^x_m$ and $M^y_m$ a finite number of points $x$, $y$, each satisfying
    \[x \in \spanop \{ x', \nabla_x f_m(x'', y'')\}, \quad y \in \spanop \{ y', \nabla_y f_m(x'', y'')\},\]
    for given $x', x'' \in \mathcal{M}^x_m$ and $y', y'' \in \mathcal{M}^y_m$.
    \item \textbf{Communication:} $M^x_m$ and $M^y_m$ are updated according to
    \[ \mathcal{M}^x_m := \spanop \left\{ \bigcup\limits_{(i, m) \in \mathcal{E}^k} \mathcal{M}^x_i\right\}, \quad \mathcal{M}^y_m := \spanop \left\{ \bigcup\limits_{(i, m) \in \mathcal{E}^k} \mathcal{M}^y_i \right\}, \]
    where $\mathcal{G}^k = (\mathcal{V}, \mathcal{E}^k)$ is the current state of network.
    \item \textbf{Output:} The final global output at current moment of time is calculated as:
    \[\hat x \in \spanop \left\{ \bigcup\limits_{m = 1}^M \mathcal{M}^x_m\right\}, \quad \hat y \in \spanop \left\{ \bigcup\limits_{m = 1}^M \mathcal{M}^y_m\right\}. \]
    \end{itemize}
    
\end{definition}

To estimate the lower bound of distributed saddle point problem \eqref{eq:main_prob}, we need to provide a "bad function" and a "bad sequence of graphs" such that any black-box procedure cannot solve it using less than a given number of rounds. Using the time-varying network from \cite{metelev2023decentralized} and the objective function from \cite{zhang2019lower} we can prove the following theorem.

\begin{theorem}
	For any $L > \mu > 0$ and any $\chi \geq 1$, there exists a decentralized distributed saddle point problem on $\mathcal{X} \times \mathcal{Y} = \mathbb{R}^n \times \mathbb{R}^n$ (where $n$ is sufficiently large) with $x^*, y^* \ne 0$, a sequence of graphs $\{\mathcal{G}^k = (\mathcal{V}, \mathcal{E}^k)\}_{k = 0}^{\infty}$, where consecutive graphs differ in no more than two edges, and a sequence of corresponding gossip matrices $\{W^k\}_{k = 0}^{\infty}$ with characteristic number $\chi$, such that for any output $\hat{x}, \hat{y}$ after $K$ communication rounds of any \textbf{BBP}, the following estimate hold:
	\[\|\hat{x} - x^*\|^2 + \|\hat{y} - y^*\|^2 \geq \Omega \left(\text{exp}\left(-\frac{32 \mu}{L - \mu} \cdot \frac{K}{\chi} \right) \| y_0 - y^* \| ^ 2\right) \]
\end{theorem}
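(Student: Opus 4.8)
The plan is to build the instance by pasting the worst-case min--max quadratic of \cite{zhang2019lower} onto the worst-case slowly time-varying graph sequence of \cite{metelev2023decentralized}, and then to control how fast the coordinate-support of the local memories can grow. For the function I would take (a rescaling of) Zhang et al.'s strongly-convex--strongly-concave quadratic on $\mathbb{R}^n\times\mathbb{R}^n$, schematically $f(x,y)=\tfrac{\mu}{2}\|x\|^2-\tfrac{\mu}{2}\|y\|^2+\langle Bx,y\rangle-\langle b,x\rangle$, where $B$ is a bidiagonal ``shift-type'' coupling matrix scaled so that the induced operator has condition number $\Theta(L/\mu)$ and $b$ is chosen so the saddle point $(x^*,y^*)$ is nonzero. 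Two properties are used. \emph{(i) Geometric decay:} $|y^*_i|=\Theta(q^{\,i})$ with $q=q(L/\mu)\in(0,1)$ and $\log(1/q)=\Theta\!\big(\tfrac{\mu}{L-\mu}\big)$, so every $z=(x,y)$ with both blocks supported on $\{e_1,\dots,e_t\}$ satisfies $\|z-z^*\|^2\ge c\,q^{2t}\|z^*\|^2\ge c\,q^{2t}\|y_0-y^*\|^2$ (recall $z_0=0$). \emph{(ii) Support propagation:} in a \textbf{BBP}, one evaluation of $\nabla_x f_m$ or $\nabla_y f_m$ enlarges the support of a memory by at most one coordinate, and the ``odd/even'' structure of $B$ chains the dependencies as $x_1\!\to\!y_2\!\to\!x_3\!\to\!\cdots$, so a new coordinate requires alternating local computations at two distinguished nodes.

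Next I would distribute $f=\tfrac{1}{M}\sum_m f_m$ so that the two halves of the coupling (odd and even rows of $B$) sit at two designated nodes of Metelev et al.'s graph sequence, every other $f_m$ being a separable quadratic whose gradients never enlarge a support; the average is still $f$. For $\{\mathcal{G}^k=(\mathcal{V},\mathcal{E}^k)\}$ and the gossip matrices $\{W^k\}$ I would use exactly the construction of \cite{metelev2023decentralized}: each $\mathcal{G}^k$ is connected, consecutive graphs differ in at most two edges, $\chi(W^k)=\chi$, and --- its defining property --- in the \textbf{BBP} model $\Omega(\chi)$ communication rounds are needed before information held by one of the two designated nodes can reach the memory of the other (this is where dropping the connected-skeleton / Markovian assumptions of Section~\ref{sec:upper_bounds} forces the worst case $\chi$ rather than $\sqrt{\chi}$). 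Since this lemma is purely a statement about how spans grow under $\mathcal{E}^k$-communication together with one-coordinate local extensions, it transfers from the optimization setting to ours without change.

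Putting it together: let $T(K)$ be the largest index occurring in $\bigcup_m(\mathcal{M}^x_m\cup\mathcal{M}^y_m)$ after $K$ communication rounds (with arbitrarily many local steps). By (ii) the support grows only at the two designated nodes and only after the complementary-parity coordinate has crossed to it, and by the graph lemma each such crossing costs $\Omega(\chi)$ rounds, so $T(K)\le 1+K/(c_0\chi)$ for an absolute constant $c_0$. Any output $(\hat x,\hat y)\in\spanop\bigcup_m(\mathcal{M}^x_m\cup\mathcal{M}^y_m)$ is therefore supported on $\{e_1,\dots,e_{T(K)}\}$, and by (i)
\[
\|\hat x-x^*\|^2+\|\hat y-y^*\|^2\ \geq\ c\,q^{2T(K)}\,\|y_0-y^*\|^2\ \geq\ c'\,\exp\!\Big(\!-\tfrac{2\log(1/q)}{c_0}\cdot\tfrac{K}{\chi}\Big)\|y_0-y^*\|^2 ,
\]
and fixing the construction's absolute constants so that $2\log(1/q)/c_0\le 32\mu/(L-\mu)$ yields the stated bound.

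The main obstacle is not the network lemma (imported from \cite{metelev2023decentralized}) but verifying that Zhang et al.'s saddle quadratic behaves, in the two-memory \textbf{BBP} model, exactly like the one-memory optimization bad function with respect to support growth: one must check that the coupled recursion $x_1\!\to\!y_2\!\to\!x_3\!\to\!\cdots$ advances the joint ($x$- and $y$-) support by exactly one coordinate per alternating local computation, with nothing leaking faster through the bilinear $x$--$y$ coupling or through the separable parts distributed over the intermediate nodes; and then tracking the constants --- the explicit $32$ and the $\Omega(\cdot)$ prefactor --- through Zhang's linear recursion for $(x^*,y^*)$. The rest is routine bookkeeping.
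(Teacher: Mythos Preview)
Your proposal is correct and follows essentially the same route as the paper: the paper also splices the bilinear bad saddle function (in the form of \cite{beznosikov2020distributed}, which is Zhang et al.'s quadratic with the coupling split into $A_1,A_2$) onto the $T_{a,b}$ slowly time-varying construction of \cite{metelev2023decentralized}, proves the support-growth lemma $T(K)\le\lfloor K/d\rfloor$ with $\chi\le 8d$, and finishes via $\ln q\ge -2\mu/(L-\mu)$. The only cosmetic difference is that the paper distributes the two halves of the coupling over two \emph{groups} $\mathcal{V}_1,\mathcal{V}_2$ of $\lfloor d/2\rfloor$ fixed vertices (rather than two single designated nodes), which is what the $T_{a,b}$ construction naturally provides.
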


\begin{proof}
	Let us introduce the graph $T_{a, b}$ from \cite{metelev2023decentralized}. This graph contains two partitions: left and right, these partitions consist of $a$ and $b$ vertices respectively. Each vertex in partitions is connected to its root. These roots are connected to the another vertex called  central root.
 
     Consider the network with $|\mathcal{V}| = 2 d + 3$ nodes ($d \geq 2$). We select a node to be the central root and two other nodes to be the left and right roots. Central root can be changed over time but partition roots are fixed. We also select $\left[\frac{d}{2}\right]$ fixed vertices for each partition and denote them by $\mathcal{V}_1$ (left side) and $\mathcal{V}_2$ (right side). At each communication round $k$, the graph $\mathcal{G}^k$ has the form $T_{a, b}$, where $a + b = 2d$ and $a, b \geq \left[\frac{d}{2}\right]$.
     
     The communication network changes alternatively in two phases. The first phase starts with graph $T_{2 d - \left[\frac{d}{2} \right], \left[\frac{d}{2} \right]}$ and ends with graph $T_{\left[\frac{d}{2} \right], 2 d - \left[\frac{d}{2} \right]}$. 
     At each iteration, the central root is moved to the right partition and one vertex from left partition, but not in $\mathcal{V}_1$, becomes the central root. The second phase has the same procedure, but from right to left.

	We modify the objective function from section B.1 in \cite{beznosikov2020distributed} based on our graph type:
	
	\begin{equation} \label{eq:node-func}
		f_m(x, y) = 
		\begin{cases}
			f_1(x,y) = \frac{M}{2 | \mathcal{V}_2|} \cdot \frac{L}{2} x^T A_1 y + \frac{\mu}{2} \|x\|^2 - \frac{\mu}{2} \|y\|^2 + \frac{M}{2|\mathcal{V}_2|} \cdot \frac{L^2}{2\mu} e_1^Ty, & m \in \mathcal{V}_2 \\
			f_2(x,y) = \frac{M}{2|\mathcal{V}_1|} \cdot \frac{L}{2} x^T A_2 y + \frac{\mu}{2} \|x\|^2 - \frac{\mu}{2} \|y\|^2, & m \in \mathcal{V}_1\\
			f_3(x,y) = \frac{\mu}{2} \|x\|^2 - \frac{\mu}{2} \|y\|^2, & \text{otherwise}
		\end{cases}\,.
	\end{equation}
	where $e_1 = (1, 0, \dots , 0)$ and
	
	\begin{center}
		$A_1 =$ 
		$\begin{pmatrix}
			& 1 & 0 &  & & & & & & \\ 
			& & 1 & -2 & & & & & & \\
			& & & 1 & 0 & & & & & \\
			& & & & 1 & -2 & & & & \\
			& & & & & \dots & \dots & & & \\
			& & & & & & 1 & -2 & & \\
			& & & & & & & 1 & 0 & \\
			& & & & & & & & 1 &
		\end{pmatrix}$, \quad
		$A_2 = $
		$\begin{pmatrix}
			& 1 & -2 &  & & & & & & \\ 
			& & 1 & 0 & & & & & & \\
			& & & 1 & -2 & & & & & \\
			& & & & 1 & 0 & & & & \\
			& & & & & \dots & \dots & & & \\
			& & & & & & 1 & 0 & & \\
			& & & & & & & 1 & -2 & \\
			& & & & & & & & 1 &
		\end{pmatrix}$.
	\end{center}

	Consider the problem with global objective function:
	\[
	f(x,y) := \frac{1}{M} \sum_{m=1}^{M} f_m(x, y) = \frac{1}{M}(|\mathcal{V}_2| \cdot f_1(x, y) + |\mathcal{V}_1| \cdot f_2(x, y) + (M - |\mathcal{V}_1| - |\mathcal{V}_2|) \cdot f_3(x, y)
	\]
	\begin{equation} \label{eq:global-func}
		= \frac{L}{2} x^T A y + \frac{\mu}{2} \|x\|^2 - \frac{\mu}{2} \|y\|^2 + \frac{L^2}{4\mu} e_1^Ty, \; \text{with} \;A = \frac{1}{2}(A_1 + A_2).
	\end{equation}

        We estimate the number of communication rounds required to obtain a new non-zero element in a local memory using the following lemma.
	
	\begin{lemma}
		Let Problem \eqref{eq:main_prob} be solved by any \textbf{BBP}. Then after $K$ communication rounds only the first $\lfloor \frac{K}{d} \rfloor$ coordinates of the global output can be non-zero while the rest of the $n - \lfloor \frac{K}{d} \rfloor$ coordinates are strictly equal to zero.
	\end{lemma}
	\begin{proof}
  
		Between two consecutive communication rounds, only nodes from $\mathcal{V}_1$ and $\mathcal{V}_2$ can add new non-zero coordinate to their local memory, but in this interval two nodes from $\mathcal{V}_1$ and $\mathcal{V}_2$ cannot progress simultaneously. Moreover, at most one new non-zero coordinate can be added between two communication rounds (see \cite{beznosikov2020distributed} for more details).
  
		Hence, we constantly have to transfer information from the group of nodes $\mathcal{V}_1$ to $\mathcal{V}_2$ and back to get new non-zero coordinates. For each new non-zero coordinate, we need at least one local computation. So $T > K$ is required. We know from \cite{metelev2023decentralized} that each transfer requires at least $d$ communication rounds. So after $K$ communication rounds at most first $\lfloor \frac{K}{d} \rfloor$ coordinates of the global output can be non-zero.  
	\end{proof}

        We use the following auxiliary lemmas to estimate lower bound on convergence.
	\begin{lemma} 
		\textbf{(Lemma 4 from \cite{beznosikov2020distributed})}
		Let $\alpha = \frac{4\mu^2}{L^2}$ and $q = \frac{1}{2}(2 + \alpha - \sqrt{\alpha^2 + 4\alpha}) \in (0;1)$
		- the smallest root of $q^2 - (2 + \alpha)q + 1 = 0$, and let introduce approximation $\Bar{y}^*$
		\[\Bar{y}_i^* = \frac{q^i}{1-q}.\]
		Then error between approximation and real solution of \eqref{eq:global-func} can be bounded 
		\[\| \Bar{y}^* - y^* \| \leq \frac{q^{n+1}}{\alpha(1-q)}.\]
	\end{lemma}

	\begin{lemma}
		\textbf{(Lemma 5 from \cite{beznosikov2020distributed})}
		Consider a distributed saddle point problem in form \eqref{eq:node-func}, \eqref{eq:global-func} with sequence of graphs $\{\mathcal{G}_k = (\mathcal{V}, \mathcal{E}_k)\}_{k = 0}^{\infty}$ and sequence of corresponding gossip matrices $\{W(\mathcal{G}_k)\}_{k = 0}^{\infty}$. For any pairs $T, K (T > K)$ one can found size of the problem $n \geq \text{max} \{2 \text{log}_q \left( \frac{\alpha}{4 \sqrt{2}} \right), 2K \}$, where $\alpha = \frac{4\mu^2}{L^2}$ and $q = \frac{1}{2} (2 + \alpha - \sqrt{\alpha^2 + 4 \alpha}) \in (0; 1)$. Then, any output $\hat{x}, \hat{y}$ produced by any $\mathbf{BBP}(T, K)$ after $K$ communication rounds and $T$ local computations, is such that
		\[\|\hat{x} - x^*\|^2 + \|\hat{y} - y^*\|^2  \geq q^{\frac{2K}{d}} \frac{\|y_0 - y^* \|^2}{16}.\]
	\end{lemma}
	
	Using result from the proof of Proposition 3.6 in \cite{zhang2019lower}, we have:
	
	\[\text{ln}(q) \geq \frac{q -1}{q} = \frac{2}{1 - \sqrt{\frac{L^2}{\mu^2} + 1}} \geq \frac{2}{1 - \frac{L}{\mu}} = \frac{-2 \mu}{L - \mu}.\]
	
	For each graph in our sequence we map a weighted Laplacian from Lemma 8 in \cite{metelev2023decentralized}, so $\chi \leq 8d$.
 
        Hence
	\[ \text{ln} (q) \cdot \frac{2K}{d} \geq \frac{-4 \mu}{L - \mu} \cdot \frac{K}{d} \geq \frac{-32 \mu}{L - \mu} \cdot \frac{K}{\chi}.\]
	
	We get
	\[q^{\frac{2K}{d}} = \text{exp} \left( \text{ln} (q) \cdot \frac{2K}{d} \right) \geq \text{exp} \left( \frac{-32 \mu}{L - \mu} \cdot \frac{K}{\chi} \right).\]
	
	So we obtain
	\[\|\hat{x} - x^*\|^2 + \|\hat{y} - y^*\|^2 = \Omega \left(\text{exp}\left(-\frac{32 \mu}{L - \mu} \cdot \frac{K}{\chi} \right) \| y_0 - y^* \| ^ 2\right). \]
\end{proof}

\begin{corollary}
	In the setting of Theorem 1, the number of communication rounds required to obtain a $\varepsilon$-solution is lower bounded by
	\[\Omega \left( \chi \frac{L}{\mu} \cdot \text{log} \left( \frac{\| y^* \| ^ 2}{\varepsilon} \right) \right),\]
	and the number of local calculations on each node is lower bounded by:
	\[\Omega \left( \frac{L}{\mu} \cdot \text{log} \left( \frac{\| y^* \| ^ 2}{\varepsilon} \right) \right).\]
\end{corollary}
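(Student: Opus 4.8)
The plan is to read off both bounds from Theorem~1 with essentially no new machinery: the communication bound by inverting the exponential error estimate, and the per-node oracle bound by re-examining the coordinate-counting lemmas that feed into the theorem's proof.

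For the communication lower bound I would start from the guarantee of Theorem~1, which after $K$ rounds reads $\|\hat{x}-x^*\|^2+\|\hat{y}-y^*\|^2 \geq c\,\exp\!\left(-\tfrac{32\mu}{L-\mu}\cdot\tfrac{K}{\chi}\right)\|y_0-y^*\|^2$ for an absolute constant $c$. A necessary condition for an $\varepsilon$-solution is that this lower bound not exceed $\varepsilon$; imposing $c\,\exp\!\left(-\tfrac{32\mu}{L-\mu}\cdot\tfrac{K}{\chi}\right)\|y_0-y^*\|^2 \le \varepsilon$, taking logarithms and solving for $K$ gives $K \ge \tfrac{(L-\mu)\chi}{32\mu}\log\tfrac{c\|y_0-y^*\|^2}{\varepsilon}$. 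Since the black-box memories are initialized at $\{0\}$ we have $y_0=0$, so $\|y_0-y^*\|=\|y^*\|$, and because $L>\mu$ the factor $\tfrac{L-\mu}{\mu}$ is $\Theta(L/\mu)$; this yields precisely $K=\Omega\!\left(\chi\tfrac{L}{\mu}\log\tfrac{\|y^*\|^2}{\varepsilon}\right)$. This step is pure algebra and should present no difficulty.

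For the per-node local-computation bound I would instead exploit the internal structure revealed by the lemma stating that after any number of rounds the global output can have only its first $p$ coordinates non-zero, each created by a local gradient computation at a node of $\mathcal{V}_1$ or $\mathcal{V}_2$. First I would lower bound the error purely in terms of $p$: using the approximation $\bar{y}^*_i=q^i/(1-q)$ from the auxiliary lemma, the tail $\sum_{i>p}(y^*_i)^2$ is $\Omega(q^{2p})$, so any output with only $p$ non-zero coordinates has error $\Omega(q^{2p})$. Forcing this below $\varepsilon$ and using $-\log q=\Theta(\mu/L)$, which follows from the bound $\ln q \ge -2\mu/(L-\mu)$ already established in the theorem's proof, gives $p=\Omega\!\left(\tfrac{L}{\mu}\log\tfrac{\|y^*\|^2}{\varepsilon}\right)$. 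Finally, because the non-zero coordinates are activated one at a time in an alternating fashion between $\mathcal{V}_1$ and $\mathcal{V}_2$, each responsible node must perform $\Omega(p)$ gradient evaluations, which is the claimed per-node bound.

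The main obstacle is the second bound rather than the first: I must convert the \emph{global} statement ``at most $p$ non-zero coordinates'' into a \emph{per-node} count, arguing carefully that the alternating activation pattern forces a single node, not merely the collection of all nodes, to accumulate $\Omega(p)$ local computations. The $K$-round estimate, by contrast, is an immediate inversion of Theorem~1's exponential and carries no hidden subtlety.
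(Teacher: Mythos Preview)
Your approach is correct and, for the communication bound, coincides exactly with the paper's (implicit) argument: the paper states the corollary with no separate proof, so the intended derivation is precisely the inversion of the exponential in the theorem that you describe.

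For the local-computation bound the paper again gives no explicit argument, so you are actually supplying more than the paper does. Your tail estimate $\sum_{i>p}(y_i^*)^2=\Omega(q^{2p})$ together with $-\ln q=\Theta(\mu/L)$ is the right mechanism. Regarding the obstacle you flag: you do not need to pin the $\Omega(p)$ computations on a \emph{single} physical node. The cleaner resolution is that coordinate $i+1$ cannot be unlocked until coordinate $i$ already lies in the local memory of some node holding the appropriate half of $A$; hence the unlocks are \emph{temporally sequential}, and in $T$ synchronized local-computation rounds at most $T$ coordinates can become nonzero, irrespective of how many nodes share the same $f_1$ or $f_2$. This immediately gives $T\ge p$ as a per-round (hence per-node) lower bound and sidesteps the distribution-across-nodes issue entirely. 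With that reformulation your second part goes through without difficulty.
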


\section{Conclusion}

In this paper, we studied min-max optimization over slowly time-varying graphs. We showed that if the graph changes in an adversarial manner and a constant number of edges is changed at each iteration, then lower complexity bounds coincide with those for arbitrary changing networks. Moreover, we showed that for particular time-varying graphs -- networks with connected skeleton and networks with Markovian changes -- acceleration of communication procedures is possible. We proposed the corresponding algorithms for saddle point problems for these two classes of problems.

The research was supported by Russian Science Foundation (project No. 23-11-00229), \\ \url{https://rscf.ru/en/project/23-11-00229/}.

\bibliographystyle{abbrv}
\bibliography{references}

\end{document}